\documentclass[onefignum,onetabnum]{siamart220329}



\usepackage{lipsum}
\usepackage{amsfonts}
\usepackage{graphicx}
\usepackage{epstopdf}
\usepackage{algorithmic}
\usepackage{mathrsfs}
\usepackage{multirow}
\usepackage{matlab-prettifier}
\usepackage{makecell}
\usepackage{soul}
\ifpdf
  \DeclareGraphicsExtensions{.eps,.pdf,.png,.jpg}
\else
  \DeclareGraphicsExtensions{.eps}
\fi

\newcommand{\mat}[1]{\lstinline[style=Matlab-editor]{#1}}
\newcommand\ChangeRT[1]{\noalign{\hrule height #1}}

\newsiamremark{remark}{Remark}
\newsiamremark{hypothesis}{Hypothesis}
\crefname{hypothesis}{Hypothesis}{Hypotheses}
\newsiamthm{claim}{Claim}
\newsiamremark{prop}{Proposition}

\headers{}{}

\title{Mixed Precision Iterative Refinement for Linear Inverse Problems}

\author{James G. Nagy\thanks{Department of Mathematics, Emory University, Atlanta, GA 
  (\email{jnagy@emory.edu}), (\email{lonisk@emory.edu}). This work was partially supported by the U.S. National Science Foundation, Grants DMS-2038118 and DMS-2208294.}
\and Lucas Onisk\footnotemark[1]}

\newcommand{\rev}[1]{\textcolor{black}{#1}}

\newcommand{\Rev}[1]{\textcolor{black}{#1}}

\begin{document}

\maketitle

\begin{abstract}
This study investigates the iterative refinement method applied to the solution of linear discrete inverse problems by considering its application to the Tikhonov problem in mixed precision. Previous works on mixed precision iterative refinement methods for the solution of symmetric positive definite linear systems and least-squares problems have shown regularization to be a key requirement when computing low precision factorizations. For problems that are naturally severely ill-posed, we formulate the iterates of iterative refinement in mixed precision as a filtered solution using the preconditioned Landweber method with a Tikhonov-type preconditioner. Through numerical examples simulating various mixed precision choices, we showcase the filtering properties of the method \Rev{and the achievement of comparable working accuracy of discrete inverse problems (i.e., to within a few decimal places in relative error) compared to results computed in double precision as well as another approximate iterative refinement method which we use as a benchmark.}

\end{abstract}

\begin{keywords} 
mixed precision, inverse problems, iterative refinement, Tikhonov regularization, preconditioned iterative methods, Landweber method
\end{keywords}

\begin{MSCcodes} AMS Subject Classifications:
 15A29 
 65F10 
 65F22 
 65F08 
\end{MSCcodes}

\section{Introduction} \label{intro}

We are interested in computing approximate solutions to linear least-squares problems of the form
\begin{equation} \label{minProb}
    \min_{x\in \mathbb{R}^{n}} \left\|Ax-b\right\|
\end{equation}
in computing environments where mixed precision floating-point arithmetic is available. Here, $A \in \mathbb{R}^{m \times n}$, $m \geq n$ is a matrix whose singular values decay without significant gap and cluster at the origin (i.e. the matrix is ill-conditioned). We denote the Euclidean norm by $\|\cdot\|$. These types of problems are commonly referred to as linear discrete ill-posed problems. They can arise through the discretization of Fredholm integral equations of the first kind; see \cite{H98, EHN96}, but also arise in massive data streaming problems such as the training of the random feature model in machine learning \cite{RB07} or limited angle tomography problems including, for example, those from medical imaging \cite{Chung_inter3, streamInvProb}.

In applications the vector $b$ in \eqref{minProb} often represents measurements that are corrupted by error arising through noise contamination or instrumentation disturbances. From a numerical point of view this error may also include truncation error or approximation error. We denote this error by $e$ so that
\begin{equation*}
    b = \hat{b} + e
\end{equation*}
where $\hat{b}$ is the unknown error-free vector associated with $b$. Ideally, we would like to determine the solution of minimal Euclidean norm, $x^{\dagger}$, of the unavailable least-squares problem 
\begin{equation} \label{trueMinProb}
   x^{\dagger}= \arg\min_{x\in \mathbb{R}^{n}} \left\|Ax-\hat{b}\right\|.
\end{equation}
It's well understood that because $b$ is contaminated by error and the singular values of $A$ cluster at the origin that the solution of \eqref{minProb} of minimal Euclidean norm is usually a poor approximation to $x^{\dagger}$. Regularization is a common strategy that replaces the problem \eqref{minProb} with a nearby problem that is less sensitive to the error in $b$. 

In response to advancements in GPU computing architectures that natively support mixed precision computing, Higham and Pranesh in \cite{Pranesh21} consider the solution of symmetric positive definite (SPD) linear systems and least-squares problems in multiple precisions using iterative refinement (IR). They found that definiteness of SPD matrices could be lost when rounded to a precision with fewer representation bits, which in turn, could result in the failure of a Cholesky factorization when adapting GMRES-based IR strategies; see \cite{CarsonHigham18, CarsonHighamPranesh20}. \rev{It was determined that performing a diagonal shifting scheme could preserve the definiteness of the matrix - which may be viewed as a form of regularization.} Our interests in this work are focused on applying IR to least-squares problems whose matrix $A$ is naturally severely ill-conditioned without any assumed underlying structure, to which Tikhonov regularization is often applied. Other works by Higham and others concerning the use of IR to solve linear systems using mixed precision include \cite{Carson2, Higham1, Khan23, Carson1}.

In Tikhonov regularization, \eqref{minProb} is replaced by the penalized least-squares problem
\begin{equation} \label{TikEqn}
    \min_{x\in \mathbb{R}^{n}} \left\{\left\|Ax-b\right\|^2 + \alpha^2\left\|Lx\right\|^2\right\},
\end{equation}
where $\alpha>0$ is a regularization parameter and $L \in \mathbb{R}^{s \times n}$ is a regularization matrix. We focus on what is termed \emph{standard} Tikhonov, i.e. when $L = I$. When $L$ is chosen so that the null spaces of $A$ and $L$ trivially intersect then the solution of \eqref{TikEqn} may be expressed as
\begin{equation} \label{TikClosedForm}
    x^{(\alpha)} = \left(A^TA + \alpha^2 I\right)^{-1}A^Tb,
\end{equation}
where the superscript $^T$ denotes the transpose operation. The regularization parameter $\alpha$ may be thought of as a `trade-off' parameter which balances the sensitivity of the solution vector $x^{(\alpha)}$ to the error in $b$, as well as the closeness to the desired solution $x^{\dagger}$.

Given the unexpected necessity of regularization in structure leveraging algorithms for SPD linear systems in \cite{Pranesh21}, we investigate the use of IR to solve the Tikhonov problem in mixed precision providing a natural extension to the aformentioned works. To better understand the regularized solutions of ill-posed problems computed using IR we derive a methodology to formulate the iterates applied to the Tikhonov problem as filtered solutions by writing them as a recursive relationship between the iterates of preconditioned Landweber with a Tikhonov-type preconditioner and previous iterates. Additionally, we demonstrate in our numerical results that mixed precision IR on the Tikhonov problem gives \Rev{comparable working accuracy (i.e., to within a few decimal places in relative error) against results computed in double precision as well as what we term the \emph{Approximated Iterative Refinement} method (see Section \ref{secNum4}),} which supports its use in modern applications that natively support mixed precision floating-point arithmetic.

An outline of this paper is as follows. In Section \ref{sec2} we provide necessary background on topics including the Landweber iteration, IR, and filtering methods. Section \ref{sec3} discusses preconditioned Landweber and its connection to the filtering analysis of IR on the Tikhonov problem in mixed precision, which is discussed in Section \ref{sec4}. Experimental results and concluding remarks are given in Sections \ref{sec5} and \ref{sec6}, respectively.

\section{Background} \label{sec2}
This section provides necessary background and notation for representing the regularized solution of a linear discrete ill-posed problem as a filtered solution. The Landweber and IR methods are also relevantly reviewed.

\subsection{Filtered Solution of Linear Discrete Ill-posed Problems}

Many inverse problems whose approximate solution is given by \eqref{minProb} have been well studied using the singular value decomposition (SVD) of $A=U\Sigma V^T$, where $U \in \mathbb{R}^{m \times m}$ and $V \in \mathbb{R}^{n \times n}$ are orthogonal matrices whose columns represent the left and right singular vectors of $A$, respectively. \rev{The diagonal matrix $\Sigma \in \mathbb{R}^{m \times n}$ contains the singular values given by $\sigma_j$ for $j=1,2,\dots,n$ on its main diagonal ordered in non-increasing fashion.} Using the SVD, one can write the least-squares solution of \eqref{minProb} as a linear combination of the right singular vectors with the $j^{th}$ coefficient given by $u_j^Tb/\sigma_j$. Since the singular values of $A$ cluster near numerical zero, the error $e$ contaminating $b$ is propagated into the computed solution for high index values $j$ corresponding to high frequency information \cite{H98, HNO}.

To prevent the propagation of error into the approximate solution of \eqref{minProb}, the conditioning of the problem can be improved by employing filtering methods which can be formulated as a modified inverse solution \cite{HNO, NP03}. A filtered solution is of the form
\begin{equation} \label{filteredSoln}
    x_{filt.} = \sum_{j=1}^n  \phi_j \frac{u_j^Tb}{\sigma_j}v_j
\end{equation}
where an intelligent selection of the filter factors $\phi_j \in \mathbb{R}$ can remove deleterious components of the approximate solution corresponding to high frequency information that is dominated by noise - which can be considered a regularized solution. To achieve an effective solution this corresponds to $\phi_j \approx 1$ for small $j$ and $\phi_j \approx 0$ for large $j$. Using the SVD of $A$, \eqref{TikClosedForm} may be written as a filtered solution with filter factors given by $\phi_j = \sigma_j^2/(\sigma_j^2 + \alpha^2)$ for $j=1,2,\dots,n$.

\subsection{Landweber Iteration}

A regularized solution of \eqref{minProb} can also be obtained by applying an iterative method and terminating the iterations before the error is propagated. The discrete Landweber iteration \cite{Landweber} is one such method whose $k^{th}$ iterate is given by
\begin{equation} \label{landweber}
    x^{(k)} = x^{(k-1)} + \zeta A^T\left(b-Ax^{(k-1)}\right)
\end{equation}
where $\zeta \in \left(0,1/\sigma_1^2\right]$ is a relaxation term with $\sigma_1$ the largest singular value of $A$. Without loss of generality, we choose $\zeta=1$ in this work since $A$ can be scaled appropriately\rev{; see \cite{engl1996regularization} for further discussion on the choice of $\zeta$ for inverse problems.} While robust, this classical variant of Landweber is well known to have slow convergence towards a useful solution which can be understood from viewing its $k^{th}$ solution as a filtering method \cite{HankeHansen93}:
\begin{equation} \label{filterLand}
        x^{(k)} = \sum_{j=1}^{n} \Big(1-(1-\sigma_j^2)^{k} \Big) \frac{u_j^Tb}{\sigma_j}v_j
\end{equation}
where we denote $\phi^{(k)}_j = \left(1-(1-\sigma_j^2)^{k} \right) \in \mathbb{R}$ as the $j^{th}$ filter factor of the $k^{th}$ iterate. 

For a scaled matrix $A$ whose largest singular value is one, only the first filter factor of the $k^{th}$ iteration, $\phi^{(k)}_1$ will be $1$, with subsequent filter factors of the same iteration rapidly decaying to numerical zero. It is only when $k$ grows that early filtering values, i.e., $\phi^{(k)}_j$ for small $j$, cluster near $1$. Because of this behavior, it can take many iterations to fully capture the dominant right singular vectors that constitute a meaningful approximate solution for inverse problems. One common way to overcome this difficulty is to employ a preconditioner; we comment more on this in Section \ref{sec3}.

\subsection{Iterative Refinement on the Tikhonov Problem}

Variants of fixed and mixed precision IR methods are not new - having been investigated consistently since Wilkinson's first programmed version around 1948 \cite{Wilk48}. A historical treatise of the progression of IR for linear problems may be found in \cite{CarsonHigham18, Higham22}. \rev{On a generic linear system, IR computes the $k^{th}$ approximate solution, $x^{(k)}$, in three steps: (i) the $(k-1)^{th}$ residual $r^{(k-1)}$ is computed, (ii) the correction equation $Ah^{(k-1)}=r^{(k-1)}$ is solved, and (iii) the current solution is updated $x^{(k)}=x^{(k-1)}+h^{(k-1)}$.} The process may be repeated as necessary until a termination criterion is met. 
 
When considering the application of IR to the Tikhonov problem given by
\begin{equation} \label{IR}
    (A^TA + \alpha^2I)x^{(k)}=A^Tb
\end{equation}
which was first discussed by Riley in \cite{Riley55}, Golub in \cite{Golub65} noted that the \rev{three step procedure of IR is equivalent to the procedure of} iterated Tikhonov regularization whose $k^{th}$ iterate is given by \rev{
\begin{equation} \label{iterTikReg}
    \begin{split}
    x^{(k)} &= x^{(k-1)} + \left(A^TA+\alpha^2I\right)^{-1}A^T\left(b-Ax^{(k-1)}\right)\\
    &= x^{(k-1)} + \left(A^TA+\alpha^2I\right)^{-1}A^Tr^{(k-1)}
    \end{split}
\end{equation}
which is often used as an iterative technique to approximate the solution of \eqref{minProb}. When applying IR to the Tikhonov problem \eqref{IR} the $k^{th}$ residual, $\hat{r}^{(k)}$, may be written as follows
\begin{equation*}
    \begin{split}
        \hat{r}^{(k)} &= A^Tb - \left(A^TA+\alpha^2I\right)x^{(k)}_{IR}\\
        &= A^T\left(b - Ax^{(k)}_{IR}\right) - \alpha^2x^{(k)}_{IR}
    \end{split}
\end{equation*}
so that the $k^{th}$ iterate may be written recursively as
\begin{equation} \label{IR_Tik_expanded}
    \begin{split}
    x^{(k)}_{IR} &= x^{(k-1)}_{IR} + \left(A^TA + \alpha^2I\right)^{-1}\hat{r}^{(k-1)}\\
    &= x^{(k-1)}_{IR} + \left(A^TA + \alpha^2I\right)^{-1}A^Tr^{(k-1)} - \alpha^2 \left(A^TA + \alpha^2I\right)^{-1}x_{IR}^{(k-1)}.
    \end{split}
\end{equation}
}The algorithm for IR on the Tikhonov problem is provided in Algorithm \ref{alg_IRTik} for completeness.

\begin{algorithm}
\caption{Iterative refinement on the Tikhonov problem}
\label{alg_IRTik}
\algsetup{indent=2em,linenodelimiter=}
\begin{algorithmic}[1]
\STATE{\rev{\text{\bf{Input:}} $A \in \mathbb{R}^{m \times n},\,\, b \in \mathbb{R}^m,\,\, x_{IR}^{(0)} \in \mathbb{R}^n,\,\, \alpha>0$}}
\STATE{\rev{\text{\bf{Output:}} $x_{IR}^{(k+1)} \in \mathbb{R}^n$}}
\FOR{$k = 0,1,2,\dots$}
    \STATE{\rev{$r^{(k)} = b - Ax_{IR}^{(k)}$}}
    \STATE{\rev{$s^{(k)} = A^Tr^{(k)}-\alpha^2x_{IR}^{(k)}$}}
    \STATE{Solve $(A^TA + \alpha^2I)h^{(k)} = s^{(k)}$}
    \STATE{\rev{$x_{IR}^{(k+1)} = x_{IR}^{(k)} + h^{(k)}$}}
\ENDFOR
\end{algorithmic}
\end{algorithm}

\rev{From \eqref{IR_Tik_expanded} we may observe that the term $\left(A^TA + \alpha^2I\right)^{-1}\hat{r}^{(k-1)}$ contains the correction term of iterated Tikhonov \eqref{iterTikReg} whose filter factors at the $k^{th}$ iteration take the form}
\begin{equation}
    \phi_j^{(k)} = 1 - \left(1 - \frac{\sigma_j^2}{\sigma_j^2+\alpha^2}\right)^k,\quad \quad j=1,2,\dots,n
\end{equation}
which are described in \cite{Golub65} and \cite{King79}. \rev{The term $\alpha^2 \left(A^TA + \alpha^2I\right)^{-1}x_{IR}^{(k-1)}$ from \eqref{IR_Tik_expanded} may be interpreted as a correction term that refines the approximation at each iteration to the solution of \eqref{IR}.} Differently than discussed in the aforementioned works, we show that the filter factors of IR on the Tikhonov problem can be written with respect to the $k^{th}$ iterate of iterated Tikhonov regularization \eqref{iterTikReg}. This framework lends itself to the filter factor analysis in mixed precision where instead of the $k^{th}$ iterate of \eqref{iterTikReg} we utilize the $k^{th}$ iterate of preconditioned Landweber with a Tikhonov-type preconditioner, which we show in the next section can be written as a filtered solution.

\section{Landweber with a Tikhonov-type Preconditioner} \label{sec3}

To consider the filtering of mixed precision IR on the Tikhonov problem we first consider the preconditioning of the Landweber iteration. Denote the right-preconditioned least-squares problem \eqref{minProb} by
\begin{equation} \label{preconLS}
    \min_{x\in \mathbb{R}^{n}} \left\|AM^{-1}Mx-b\right\|
\end{equation}
with nonsingular preconditioner $M \in \mathbb{R}^{n \times n}$. By denoting $\hat{A}=AM^{-1}$ and $\hat{x}=Mx$ we may use Landweber \eqref{landweber} with $\zeta=1$ to solve \eqref{preconLS}, which may be simplified to
\begin{equation} \label{preconLand}
        x^{(k+1)} = x^{(k)} + \left(M^TM\right)^{-1}A^T\left(b-Ax^{(k)}\right).
\end{equation}
We note that if $M^TM = \left(A^TA + \alpha^2 I\right)$ then the preconditioned Landweber method is equivalent to iterated Tikhonov regularization \eqref{iterTikReg}. In this specific case, utilizing the SVD of $A$ we can represent $M^TM$ as follows
\begin{equation} \label{MM}
    \begin{split}
        M^TM &= \left(V\Sigma^T U^TU\Sigma V^T + \alpha^2 I\right) \\
            &= V\left(\Sigma^T\Sigma + \alpha^2 I\right)V^T \\
            &= VD^2V^T
    \end{split}
\end{equation}
where $D^2 = \left(\Sigma^2 + \alpha^2I\right)$ and $\Sigma^2 = \Sigma^T\Sigma \in \mathbb{R}^{n \times n}$. Thus, one may consider the preconditioning matrix $M$ of \eqref{preconLS} as $M = \left(\Sigma^2 + \alpha^2I\right)^{1/2}V^T=DV^T$: the product of a diagonal matrix with the right singular matrix of $A$. 

Herein, we utilize the $M$ notation to help simplify our notation throughout this work and to consider the situation that $M^TM$ represents an approximate Tikhonov-type preconditioner to $A^TA+\alpha^2I$. Possible approximations we consider in Section \ref{sec5} include that of (i) $M^TM$ as a low precision SVD approximation and (ii) when the structure of $A$ allows for the approximation by a second matrix $C$ whose structure allows for fast and robust matrix-vector products. The former will be our focus when considering IR on the Tikhonov problem in mixed precision. The latter case can arise naturally, for example, in image deblurring problems; see Section \ref{sec5}.

We clarify that there are various possible preconditioning strategies other than Tikhonov that could be used. It should be understood by our usage of the wording `preconditioned Landweber' in this work that we only consider Tikhonov-type preconditioners. For further discussions and other preconditioner types considered for the solution of linear discrete ill-posed problems see e.g., \cite{Estatico08,Bertero97,Strand74}.

To investigate the filter factors of mixed precision IR on the Tikhonov problem we first describe in Proposition \ref{prop1} the filter factors of preconditioned Landweber with $M^TM = \left(A^TA + \alpha^2 I\right)$. We again emphasize that the main use of the $M$ notation is to delineate between the possible change in structure of the preconditioner $M$ or its representation in a lower precision. Throughout, we will use a capital letter subscript, $_Q$, to denote terms that come from a representation of $Q$, e.g., the SVD of $Q$. Additionally, we utilize subscripts $PL$ and $IR$ on vectors to denote solutions from preconditioned Landweber and IR, respectively. 

\begin{prop}{(Filtered solution of preconditioned Landweber)\\} \label{prop1}
The $k^{th}$ iterative solution of \eqref{preconLand} with preconditioner $M = D_MV_M^T$ as defined above may be written as 
\begin{equation} \label{preconLand_wFF}
    \Rev{
	x^{(k)}_{PL} = \sum_{j=1}^n \psi^{(k)}_j\frac{u_{A,j}^Tb}{\sigma_{A,j}}v_{M,j}
    }
\end{equation}
with the $j^{th}$ filter factor per $k^{th}$ iterative step given by 
\begin{equation} \label{FF_preconLand}
\psi^{(k)}_j= 1 - \left(\frac{\sigma_{M,j}^2+ \alpha^2 - \sigma_{A,j}^2}{\sigma^2_{M,j} + \alpha^2}\right)^{k}, \quad j=1,2,\dots,n
\end{equation}
under the assumption that $V_M^TV_A = I = V_A^TV_M$.
\end{prop}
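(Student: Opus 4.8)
The plan is to establish \eqref{preconLand_wFF} by induction on $k$, after rewriting the iteration \eqref{preconLand} in the coupled spectral bases coming from the SVD $A = U_A\Sigma_A V_A^T$ and the factorization $M^TM = V_M(\Sigma_M^2 + \alpha^2)V_M^T$ induced by $M = D_MV_M^T$ with $D_M^2 = \Sigma_M^2 + \alpha^2$. First I would fix the initial guess $x^{(0)}_{PL} = 0$; this is consistent with the target formula since $\psi^{(0)}_j = 1 - 1 = 0$ for all $j$, and it supplies the base case.

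The key structural observation is that the hypothesis $V_M^TV_A = I = V_A^TV_M$ forces $V_A = V_M$, since both are orthogonal $n \times n$ matrices (multiply $V_M^TV_A = I$ on the left by $V_M$). Consequently $(M^TM)^{-1} = V_M(\Sigma_M^2 + \alpha^2)^{-1}V_M^T$ and $A^TA = V_A\Sigma_A^2 V_A^T$ are diagonalized in one common basis, and the product $(M^TM)^{-1}A^TA$ simplifies, the interior factor $V_M^TV_A$ being the identity, to $V_M(\Sigma_M^2 + \alpha^2)^{-1}\Sigma_A^2 V_M^T$. Writing $\lambda_j = (\sigma_{M,j}^2 + \alpha^2 - \sigma_{A,j}^2)/(\sigma_{M,j}^2 + \alpha^2)$, so that $1 - \lambda_j = \sigma_{A,j}^2/(\sigma_{M,j}^2 + \alpha^2)$, the iteration operator $I - (M^TM)^{-1}A^TA$ becomes diagonal in the $\{v_{M,j}\}$ basis with entries $\lambda_j$. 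This simultaneous diagonalization is the heart of the argument and the sole place the orthogonality hypothesis enters; it decouples the vector recurrence into $n$ independent scalar recurrences.

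For the inductive step I would assume $x^{(k)}_{PL} = \sum_{j=1}^n \psi^{(k)}_j (u_{A,j}^Tb/\sigma_{A,j})v_{M,j}$ with $\psi^{(k)}_j = 1 - \lambda_j^k$ and insert it into \eqref{preconLand}. Using $Av_{M,j} = \sigma_{A,j}u_{A,j}$ and $A^Tu_{A,j} = \sigma_{A,j}v_{M,j}$ (again via $V_A = V_M$), the residual term reduces componentwise to $A^T(b - Ax^{(k)}_{PL}) = \sum_j \sigma_{A,j}(u_{A,j}^Tb)(1 - \psi^{(k)}_j)v_{M,j} = \sum_j \sigma_{A,j}(u_{A,j}^Tb)\lambda_j^k v_{M,j}$. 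Applying $(M^TM)^{-1}$ scales the $j$-th component by $1/(\sigma_{M,j}^2 + \alpha^2)$, producing the correction $\sum_j (1 - \lambda_j)\lambda_j^k (u_{A,j}^Tb/\sigma_{A,j})v_{M,j}$. Adding this to the current iterate gives the $j$-th coefficient $(1 - \lambda_j^k) + (1-\lambda_j)\lambda_j^k = 1 - \lambda_j^{k+1} = \psi^{(k+1)}_j$, which closes the induction and delivers \eqref{preconLand_wFF} together with the filter factors \eqref{FF_preconLand}.

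I expect the main difficulty to be bookkeeping rather than any genuine analytic obstacle: one must consistently expand the solution in $\{v_{M,j}\}$ while keeping the data coefficients $u_{A,j}^Tb$ and the singular values $\sigma_{A,j}$ tied to the SVD of $A$, and invoke $V_M^TV_A = I$ precisely at each junction where the two bases are multiplied together. An equivalent non-inductive route would recast \eqref{preconLand} as the fixed-point iteration $x^{(k+1)}_{PL} = (I - (M^TM)^{-1}A^TA)x^{(k)}_{PL} + (M^TM)^{-1}A^Tb$ and sum the geometric series in $I - (M^TM)^{-1}A^TA$; I would favor the induction, however, because it sidesteps any need to invert $A^TA$ and thus deals more gracefully with the near-zero singular values characteristic of the ill-posed $A$.
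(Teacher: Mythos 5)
Your proof is correct, and the route is genuinely different from the paper's, though built on the same scalar algebra. The paper does not induct: it transforms to the preconditioned variables $\hat{x} = Mx$, writes the iterate as the explicit Neumann sum $\hat{x}^{(k)}_{PL} = \sum_{i=0}^{k-1}\left(I-\hat{A}^T\hat{A}\right)^i\hat{A}^Tb$ as in \eqref{prop1deriv}, maps back through $M^{-1}=V_MD_M^{-1}$, and evaluates the resulting diagonal geometric series in closed form, $\sum_{i=0}^{k-1}\left(1-\sigma_{A,j}^2/d_{M,j}^2\right)^i = \left(1-\left(1-\sigma_{A,j}^2/d_{M,j}^2\right)^k\right)\big/\left(\sigma_{A,j}^2/d_{M,j}^2\right)$. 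The alternative you sketch in your last paragraph is therefore precisely the paper's proof, and your stated reason for preferring induction is slightly overstated: the closed-form route never inverts $A^TA$ as a matrix; it only divides componentwise by $1-\lambda_j = \sigma_{A,j}^2/(\sigma_{M,j}^2+\alpha^2)$, which is the same $\sigma_{A,j}\neq 0$ caveat already implicit in the $1/\sigma_{A,j}$ appearing in \eqref{preconLand_wFF} itself. Both arguments spend the orthogonality hypothesis at exactly one junction — you by noting $V_M^TV_A=I$ with both factors orthogonal forces $V_A=V_M$ (a cleaner and sharper reading than the paper's, which merely cancels the interior factors $V_M^TV_A$ inside \eqref{prop1deriv}) — and your inductive step, with the identity $\left(1-\lambda_j^k\right)+\left(1-\lambda_j\right)\lambda_j^k = 1-\lambda_j^{k+1}$, is just the telescoped form of the paper's geometric sum. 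What your version buys is elementarity and slightly more graceful bookkeeping at small or zero $\sigma_{A,j}$, since the correction coefficient $(1-\lambda_j)/\sigma_{A,j}=\sigma_{A,j}/(\sigma_{M,j}^2+\alpha^2)$ is benign. What the paper's version buys is the intermediate representation \eqref{prop1_coroRelation}, $x^{(k)}_{PL} = V_MD_M^{-1}\hat{D}^{k-1}D_M^{-1}\Sigma_A^TU_A^Tb$, which is reused verbatim in Section 4 when simplifying the terms $\alpha^2(M^TM)^{-1}\left(I-A^TA(M^TM)^{-1}\right)^ix^{(k)}_{PL}$ en route to Theorem \ref{thm1}; with your induction one would have to re-derive an equivalent explicit expansion there.
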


\begin{proof}
We begin by considering Landweber with $\zeta=1$ applied to our right-preconditioned least-squares problem \eqref{preconLS} which may be represented by
\begin{equation*}
	\hat{x}^{(k+1)}_{PL} = \hat{x}^{(k)}_{PL} + \hat{A}^T\left(b-\hat{A}\hat{x}^{(k)}\right) 
\end{equation*}
where we recall that $\hat{A} = AM^{-1}$ and $\hat{x}=Mx$. Using our definition of $M = D_MV_M^T$ and the SVD of $A$, we may rewrite $\hat{A}$ as
\begin{equation*}
		\hat{A} = U_A\Sigma_AV^T_AV_MD_M^{-1}.
\end{equation*}
We may then write
\begin{equation*}
\hat{A}^T\hat{A} = D_M^{-1}V_M^TV_A\Sigma_A^T\Sigma_AV_A^TV_MD_M^{-1}
\end{equation*}
so that the $k^{th}$ expansion may be written as
\begin{equation} \label{prop1deriv}
	\begin{split}
		\hat{x}^{(k)}_{PL} &= \sum_{i=0}^{k-1} \left(I - \hat{A}^T\hat{A}\right)^i\hat{A}^Tb \\
		&= \sum_{i=0}^{k-1} \left(I - D_M^{-1}V_M^TV_A\Sigma_A^T\Sigma_AV_A^TV_MD_M^{-1}\right)^iD_M^{-1}V_M^TV_A\Sigma_A^TU_A^Tb.
	\end{split}
\end{equation}
Under our assumption, \eqref{prop1deriv} simplifies to
\begin{equation*}
	\hat{x}^{(k)}_{PL} =  \sum_{i=0}^{k-1} \left(I - D_M^{-1}\Sigma_A^T\Sigma_AD_M^{-1}\right)^iD_M^{-1}\Sigma_A^TU_A^Tb.
\end{equation*}
Denoting $\hat{D}^{k-1}=\sum_{i=0}^{k-1} \left(I - D_M^{-1}\Sigma_A^T\Sigma_AD_M^{-1}\right)^i$, we may write in terms of the $k^{th}$ iterative solution, $x^{(k)}$, by using that $x^{(k)}_{PL} = M^{-1}\hat{x}^{(k)}_{PL}$ so that
\begin{equation} \label{prop1_coroRelation}
	\begin{split}
		x^{(k)}_{PL} &= M^{-1}\hat{x}^{(k)}_{PL} \\
		&= V_MD_M^{-1}\hat{D}^{k-1}D_M^{-1}\Sigma_A^TU_A^Tb \\
		&= \sum_{j=1}^n \hat{d}_j^{k-1}\frac{\sigma_{A,j}}{d_{M,j}^2}\left(u_{A,j}^Tb\right)v_{M,j}
	\end{split}
\end{equation}
where $\hat{d}_j^{k-1} \in \mathbb{R}$ denotes the $j^{th}$ diagonal entry of $\hat{D}^{k-1}$ and may be written as a geometric series
\begin{equation*}
		\hat{d}_j^{k-1} = \sum_{i=0}^{k-1} \left(1 - \frac{\sigma_{A,j}^2}{d_{M,j}^2}\right)^i
		= \frac{1 - \left(1 - \frac{\sigma_{A,j}^2}{d_{M,j}^2}\right)^{k}}{\frac{\sigma_{A,j}^2}{d_{M,j}^2}}.
\end{equation*}
\rev{Additionally, $d_{M,j} = \left(\sigma^2_{M,j}+\alpha^2\right)^{1/2} \in \mathbb{R}$ denotes the $j^{th}$ entry from $D_M$.}
With this we may write $x^{(k)}_{PL}$ as
\begin{equation*}
	\begin{split}
		x^{(k)}_{PL} &= \sum_{j=1}^n \hat{d}_j^{k-1}\frac{\sigma_{A,j}}{d_{M,j}^2}\left(u_{A,j}^Tb\right)v_{M,j} \\
		&= \sum_{j=1}^n \left(1 - \left(1 - \frac{\sigma_{A,j}^2}{d_{M,j}^2}\right)^{k}\right)\frac{u_{A,j}^Tb}{\sigma_{A,j}}v_{M,j}.
	\end{split}
\end{equation*}
Taking the common denominator inside the term raised to the $k^{th}$ power and simplifying $d^2_{M,j}$ gives the filtered solution. \hfill $\qed$
\end{proof}

\noindent We mention several comments:
\begin{enumerate}

\item The solution of $x^{(k)}_{PL}$ is written using the basis vectors of $V_M$ instead of $V_A$ as in \eqref{filterLand}. Consider that we let $\hat{x}$ be the approximate solution of \eqref{preconLS} which can be written as a filtered solution \eqref{filteredSoln} using the SVD of $\hat{A}$. Then by definition of $M$, we can write $x=V_MD_M^{-1}\hat{x}$ showing that the basis choice of the solution method is independent of our orthogonality assumption.

\item The orthogonality assumption between the right singular vectors of $A$ and $M$ is used in all results herein and may be (approximately) reasonable if the precision of $M$ relative to $A$ is close enough when $M^TM$ represents a low precision approximation to $A^TA +\alpha^2I$. \rev{To this end, in our numerical experiments in Section \ref{sec5} we show that the filter factors behave in an approximate Tikhonov-type manner. Additionally, due to comment 1 above, we expect it to be less critical for the filter factors to be exact.}

\item For the remainder of this work we utilize the notation $\psi^{(k)}_j \in \mathbb{R}$ to denote the $j^{th}$ filter factor from the $k^{th}$ step of preconditioned Landweber and $\Psi^{(k)} \in \mathbb{R}^n$ to denote the vector containing the $n$ filter factors of the $k^{th}$ step.
\end{enumerate}
\section{Filtering Analysis of Iterative Refinement} \label{sec4}

In this section we derive the filter factors of IR on the Tikhonov problem using the filter factors from preconditioned Landweber. We first consider the simple case in Section \ref{sec4.1} where a Tikhonov-type $M$ may be constructed from an approximate SVD of $A$ that could model a low-precision preconditioner. In Section \ref{sec4.2} we extend our results to the determination of the filter factors in up to three precisions.

\subsection{Reformulated Iterative Refinement on the Tikhonov problem} \label{sec4.1}
We begin by stating the $k^{th}$ solution of IR on the Tikhonov problem with preconditioner $M$ written recursively:
\begin{equation} \label{IR_recursive}
    x^{(k)}_{IR} = x^{(k-1)}_{IR} + \left(M^TM\right)^{-1}A^Tr^{(k-1)} - \alpha^2 \left(M^TM\right)^{-1}x^{(k-1)}_{IR}.
\end{equation}
In Lemma \ref{IR_reform}, we show that the $k^{th}$ solution of IR on the Tikhonov problem may be written with respect to the $k^{th}$ solution of preconditioned Landweber given by \eqref{preconLand}.

\begin{lemma}{(Reformulation of IR on the Tikhonov problem)\\} \label{IR_reform}
    The $k^{th}$ solution of IR on the Tikhonov problem denoted by $x^{(k)}_{IR}$ may be written with respect to the $k^{th}$ solution of the preconditioned Landweber method \eqref{preconLand} denoted by $x^{(k)}_{PL}$. Precisely, we have that
    \begin{equation} \label{thm1_relation}
        \begin{split}
            x^{(k)}_{IR} &= x^{(k-1)}_{IR} + \left(M^TM\right)^{-1}A^Tr^{(k-1)} - \alpha^2 \left(M^TM\right)^{-1}x^{(k-1)}_{IR} \\
            &= x^{(k)}_{PL} - \alpha^2 \left(M^TM\right)^{-1}\sum_{i=0}^{k-1}\left(I-A^TA\left(M^TM\right)^{-1}\right)^ix^{(k-1-i)}_{IR}.
        \end{split}
    \end{equation}
\end{lemma}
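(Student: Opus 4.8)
The plan is to track the difference between the two iterates, $\delta^{(k)} := x^{(k)}_{IR} - x^{(k)}_{PL}$, show that it obeys a simple linear recursion, and unroll that recursion in closed form. First I would put both methods into a common fixed-point format. Writing $P := (M^TM)^{-1}$ for brevity and substituting $r^{(k-1)} = b - Ax^{(k-1)}_{IR}$ into \eqref{IR_recursive}, the IR step becomes $x^{(k)}_{IR} = (I - PA^TA)\,x^{(k-1)}_{IR} + PA^Tb - \alpha^2 P\,x^{(k-1)}_{IR}$, whereas \eqref{preconLand} reads $x^{(k)}_{PL} = (I - PA^TA)\,x^{(k-1)}_{PL} + PA^Tb$. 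Thus the two recurrences share the identical iteration operator $I - PA^TA$ and inhomogeneous term $PA^Tb$, and differ only by the extra summand $-\alpha^2 P\,x^{(k-1)}_{IR}$ carried by IR.

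Subtracting the two recurrences, the term $PA^Tb$ cancels and I obtain $\delta^{(k)} = (I - PA^TA)\,\delta^{(k-1)} - \alpha^2 P\,x^{(k-1)}_{IR}$. Assuming both methods are launched from the same initial guess so that $\delta^{(0)} = 0$, unrolling this first-order recursion yields the explicit sum $\delta^{(k)} = -\alpha^2 \sum_{i=0}^{k-1}(I - PA^TA)^i\,P\,x^{(k-1-i)}_{IR}$; adding $x^{(k)}_{PL}$ to both sides already produces an expression of the shape claimed in \eqref{thm1_relation}.

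The only remaining algebraic step is to slide the factor $P = (M^TM)^{-1}$ through the powers of the iteration operator so that the correction term reads exactly as in \eqref{thm1_relation}. I would establish the push-through identity $(I - PA^TA)^i P = P\,(I - A^TAP)^i$, which follows by an easy induction from the single-step relation $(I - PA^TA)P = P - PA^TAP = P(I - A^TAP)$. Applying this identity termwise and pulling the common factor $P$ outside the summation converts $\sum_{i=0}^{k-1}(I - PA^TA)^i P\,x^{(k-1-i)}_{IR}$ into $P\sum_{i=0}^{k-1}(I - A^TAP)^i\,x^{(k-1-i)}_{IR}$, which upon restoring $P = (M^TM)^{-1}$ and $A^TAP = A^TA(M^TM)^{-1}$ is precisely the stated correction term.

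I expect the main difficulty to be bookkeeping rather than conceptual: because $A^TA$ and $(M^TM)^{-1}$ do not commute, I must keep every product in the correct order, and I must match the summation index of the unrolled difference to the subscript $x^{(k-1-i)}_{IR}$ appearing in \eqref{thm1_relation}. A small but important point is the initialization $\delta^{(0)}=0$, which is what makes the homogeneous part of the recursion vanish and leaves $x^{(k)}_{PL}$ as the leading term. Finally, since $x^{(k)}_{PL}$ is exactly the preconditioned Landweber iterate of \eqref{preconLand}, its closed form and filter factors are already supplied by Proposition \ref{prop1}, so no further identification of that term is required.
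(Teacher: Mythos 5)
Your proof is correct, and it is organized differently from the paper's. The paper proves \eqref{thm1_relation} by direct induction on $k$: the base case uses $x^{(0)}_{IR}=x^{(0)}_{PL}=0$, and the inductive step substitutes the hypothesized decomposition $x^{(n)}_{IR}=x^{(n)}_{PL}-\Gamma$ back into the IR recursion, then redistributes the resulting terms, absorbing $\left(M^TM\right)^{-1}A^TA\,\Gamma$ together with the new $-\alpha^2\left(M^TM\right)^{-1}x^{(n)}_{IR}$ term into the re-indexed sum. You instead observe that both iterations share the same affine map $x\mapsto\left(I-PA^TA\right)x+PA^Tb$ with $P=\left(M^TM\right)^{-1}$, subtract to get the inhomogeneous first-order recursion $\delta^{(k)}=\left(I-PA^TA\right)\delta^{(k-1)}-\alpha^2P\,x^{(k-1)}_{IR}$, unroll it (a variation-of-constants step that is itself a one-line induction), and restore the stated operator ordering via the push-through identity $\left(I-PA^TA\right)^iP=P\left(I-A^TAP\right)^i$. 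The two arguments are logically equivalent --- your unrolling is the induction the paper carries out explicitly --- but yours is more modular: the $\Gamma$ bookkeeping is replaced by the closed-form solution of a linear recursion; the noncommutativity of $A^TA$ and $P$ is isolated in one cleanly stated identity (the paper sidesteps it by keeping $\left(M^TM\right)^{-1}$ factored on the left throughout, so the issue never surfaces); and your derivation makes visible that the identity needs only a common initial guess, $\delta^{(0)}=0$, rather than the zero initialization the paper fixes. Two details to make explicit in a write-up: the residual in \eqref{thm1_relation} is the IR residual $r^{(k-1)}=b-Ax^{(k-1)}_{IR}$, which is exactly what you substituted, so your common fixed-point form is faithful to the lemma; and the term $i=k-1$ of your unrolled sum involves $x^{(0)}_{IR}$, which is harmless here but worth noting when the common initial guess is nonzero.
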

\begin{proof}
We proceed via induction defining that $x^{(0)}_{IR} = x^{(0)}_{PL} = 0$. \\ \\
\emph{Base case $(k=1)$:} \\
Starting from the base relation of IR on the Tikhonov problem given by \eqref{IR_recursive} we have that
\begin{equation*}
    \begin{split}
        x^{(1)}_{IR} &= x^{(0)}_{IR} + \left(M^TM\right)^{-1}A^Tr^{(0)} - \alpha^2 \left(M^TM\right)^{-1}x^{(0)}_{IR} \\
        &= \underbrace{x^{(0)}_{PL} + \left(M^TM\right)^{-1}A^T\left(b-Ax^{(0)}_{PL}\right)}_{=x^{(1)}_{PL}}-\alpha^2 \left(M^TM\right)^{-1}x^{(0)}_{IR} \\
        &= x^{(1)}_{PL} - \alpha^2\left(M^TM\right)^{-1}\left(I-A^TA\left(M^TM\right)^{-1}\right)^0x^{(0)}_{IR} \\
        &= x^{(1)}_{PL} - \alpha^2\left(M^TM\right)^{-1}\sum_{i=0}^0 \left(I-A^TA\left(M^TM\right)^{-1}\right)^ix^{(0-i)}_{IR}
    \end{split}
\end{equation*}
completing the base case.\\ \\
\emph{Inductive step:} \\
Assume the inductive hypothesis is true for $k=n$ for $n>1$; that is
\begin{equation*}
    \begin{split}
        x^{(n)}_{IR} &= x^{(n-1)}_{IR} + \left(M^TM\right)^{-1}A^Tr^{(n-1)} - \alpha^2 \left(M^TM\right)^{-1}x^{(n-1)}_{IR} \\
        &= x^{(n)}_{PL} - \underbrace{\alpha^2 \left(M^TM\right)^{-1}\sum_{i=0}^{n-1}\left(I-A^TA\left(M^TM\right)^{-1}\right)^ix^{(n-1-i)}_{IR}}_{=\Gamma}.
    \end{split}
\end{equation*}
We proceed as follows
\begin{equation*}
    \begin{split}
        &x^{(n+1)}_{IR} = x^{(n)}_{IR} + \left(M^TM\right)^{-1}A^T\left(b-Ax^{(n)}_{IR}\right) - \alpha^2 \left(M^TM\right)^{-1}x^{(n)}_{IR} \\
        &= x^{(n)}_{IR} + \left(M^TM\right)^{-1}A^T\left(b-A\left[x^{(n)}_{PL} - \Gamma\right]\right) - \alpha^2 \left(M^TM\right)^{-1}x^{(n)}_{IR} \\
        &= x^{(n)}_{IR} + \underbrace{\left(M^TM\right)^{-1}A^T\left(b-Ax_{PL}^{(n)}\right)}_{=h^{(n)}} - \left(M^TM\right)^{-1}A^TA\Gamma - \alpha^2 \left(M^TM\right)^{-1}x^{(n)}_{IR} \\
        &= x^{(n)}_{PL} - \Gamma + h^{(n)} - \left(M^TM\right)^{-1}A^TA\Gamma - \alpha^2 \left(M^TM\right)^{-1}x^{(n)}_{IR} \\
        &= x^{(n+1)}_{PL} - \alpha^2\left(M^TM\right)^{-1}\left(I - A^TA\left(M^TM\right)^{-1}\right)^0x^{(n)}_{IR} \\
        & \hspace{5mm} - \alpha^2\left(M^TM\right)^{-1}\left(I - A^TA\left(M^TM\right)^{-1}\right)\sum_{i=0}^{n-1}\left(I - A^TA\left(M^TM\right)^{-1}\right)^ix^{(n-1-i)}_{IR} \\
        &= x^{(n+1)}_{PL} - \alpha^2 \left(M^TM\right)^{-1}\sum_{i=0}^{n}\left(I-A^TA\left(M^TM\right)^{-1}\right)^ix^{(n-i)}_{IR}.
    \end{split}
\end{equation*}
Since both the base case and the inductive step have been shown, by mathematical induction the relation \eqref{thm1_relation} holds for every natural number $k$.\hfill $\qed$
\end{proof}

With the result of Lemma \ref{IR_reform}, we may observe that the $k^{th}$ iterate of IR on the Tikhonov problem may be computed using the first $k$ solutions of \eqref{preconLand_wFF}. As such, it is possible to recursively determine the filter factors for $x^{(k)}_{IR}$. Consider the first iteration of IR using the result from Lemma \ref{IR_reform}:
\begin{equation*}
    \begin{split}
        x^{(1)}_{IR} &= x^{(1)}_{PL} - \alpha^2 (M^TM)^{-1}\sum_{i=0}^0 \left(I-A^TA(M^TM)^{-1}\right)^ix^{(0-i)}_{IR} \\
        &= x^{(1)}_{PL} = \sum_{j=1}^n \underbrace{\left(1 - \left(\frac{\sigma_{M,j}^2+\alpha^2-\sigma_{A,j}^2}{\sigma_{M,j}^2+\alpha^2}\right)^1\right)}_{=\phi_j^{(1)}}\frac{u_{A,j}^Tb}{\sigma_{A,j}}v_{M,j}.
    \end{split}
\end{equation*}
Here and throughout, we differentiate the filter factors of IR from preconditioned Landweber by defining that the filter factors of $x^{(k)}_{IR}$ are given by $\Phi_{IR}^{(k)} \in \mathbb{R}^n$ whose $j^{th}$ term we denoted by $\phi_j^{(k)}$. To determine the filter factors of $x^{(2)}_{IR}$ we proceed as follows using that $x^{(1)}_{IR} = x^{(1)}_{PL}$
\begin{equation} \label{coro_pt1}
    \begin{split}
        x^{(2)}_{IR} &= x^{(2)}_{PL} - \alpha^2 (M^TM)^{-1}\sum_{i=0}^1 \left(I-A^TA(M^TM)^{-1}\right)^ix^{(1-i)}_{IR} \\
        &= x^{(2)}_{PL} - \alpha^2 (M^TM)^{-1}\left(I-A^TA(M^TM)^{-1}\right)^0x^{(1)}_{PL}.
    \end{split}
\end{equation}

Before proceeding, we consider the general situation that will arise when expanding the $k^{th}$ solution of IR with respect to the solutions of preconditioned Landweber\rev{, i.e., we consider the simplification of
\begin{equation} \label{coro_pt2}
    \alpha^2 (M^TM)^{-1}\left(I-A^TA(M^TM)^{-1}\right)^ix^{(k)}_{PL}
\end{equation}
that arises from the ability to rewrite $x^{(k-1-i)}_{IR}$ from the summation term in \eqref{thm1_relation} in terms of components of preconditioned Landweber.}
We first note that 
$$\left(I-A^TA(M^TM)^{-1}\right)^i$$ 
may be decomposed using the definition of the preconditioner $M$, the SVD of $A$, and the orthgonality assumption between the right singular vectors of $A$ and of $M$:
\rev{\begin{equation*}
    \begin{split}
        \left(I-A^TA(M^TM)^{-1}\right)^i &= \left(I-V_A\Sigma_A^2V_A^TV_MD_M^{-2}V_M^T\right)^i \\
        &= V_A\left(I-\tilde{D}_M\right)^iV_M^T
    \end{split}
\end{equation*}
where $\tilde{D}_M = \Sigma_A^2D_M^{-2}= \text{diag}\left(\sigma_{A,1}^2/(\sigma_{M,1}^2+\alpha^2),\dots,\sigma_{A,n}^2/(\sigma_{M,n}^2+\alpha^2)\right)$.} With this, we may rewrite \eqref{coro_pt2} \rev{utilizing the first equality given in \eqref{prop1_coroRelation} so that}
    \rev{\begin{align*}
\alpha^2(M^TM)^{-1}&\left(I-A^TA(M^TM)^{-1}\right)^ix^{(k)}_{PL} \\
&= \alpha^2 V_M D_M^{-2} (I - \tilde{D}_M)^i D_M^{-2} \hat{D}^{k-1} D_M^{-2} \Sigma_A^T U_A^T b.
\end{align*}
In conjunction with the second equality of \eqref{prop1_coroRelation} we use that the $j^{th}$ entry of $\alpha^2D_M^{-2}(I-\tilde{D}_M)^i$ may be expressed as $\frac{\alpha^2}{\sigma^2_{M,j}+\alpha^2}\left(1-\frac{\sigma^2_{A,j}}{\sigma^2_{M,j}+\alpha^2}\right)^i$ to write \eqref{coro_pt2} as
\begin{equation*}
        \sum_{j=1}^n \frac{\alpha^2}{\sigma_{M,j}^2+\alpha^2}\left(1-\frac{\sigma_{A,j}^2}{\sigma_{M,j}^2+\alpha^2}\right)^i\left(1 - \left(\frac{\sigma_{M,j}^2+\alpha^2-\sigma_{A,j}^2}{\sigma_{M,j}^2+\alpha^2}\right)^k\right)\frac{u_{A,j}^Tb}{\sigma_{A,j}}v_{M,j}.
    \end{equation*}}

Returning to \eqref{coro_pt1}, we may rewrite $x^{(2)}_{IR}$ as follows
\begin{equation*}
    \begin{split}
        x^{(2)}_{IR} &= x^{(2)}_{PL} - \alpha^2 (M^TM)^{-1}\left(I-A^TA(M^TM)^{-1}\right)^0x^{(1)}_{PL} \\
        &= \sum_{j=1}^n \phi^{(2)}_{j} \frac{u_{A,j}^Tb}{\sigma_{A,j}}v_{M,j}
    \end{split}
\end{equation*}
where
\begin{equation*}
    \begin{split}
        \phi^{(2)}_{j} &= \psi^{(2)}_j -
        \frac{\alpha^2}{\sigma_{M,j}^2+\alpha^2}\left(1-\frac{\sigma_{A,j}^2}{\sigma_{M,j}^2+\alpha^2}\right)^0\left(1 - \left(\frac{\sigma_{M,j}^2+\alpha^2-\sigma_{A,j}^2}{\sigma_{M,j}^2+\alpha^2}\right)^1\right)\\
        &= \rev{\psi^{(2)}_j - \frac{\alpha^2}{\sigma_{M,j}^2+\alpha^2}\left(1 - \left(\frac{\sigma_{M,j}^2+\alpha^2-\sigma_{A,j}^2}{\sigma_{M,j}^2+\alpha^2}\right)^1\right)}\\
    \end{split}
\end{equation*}
for $j=1,2,\dots,n$. Here, we note that $\Phi^{(2)}$ is the difference between the filter factors of $x^{(2)}_{PL}$ and a scaling of $\Phi^{(1)}$. Continuing this recursive process, we may arrive at the following general result.

\begin{theorem}{(IR on the Tikhonov problem with filter factors)} \label{thm1} \\
    The $k^{th}$ solution of IR on the Tikhonov problem, $x^{(k)}_{IR}$, may be written as \Rev{
    \begin{equation*}
        x^{(k)}_{IR} = \sum_{j=1}^n \phi_{j}^{(k)}\frac{u_{A,j}^Tb}{\sigma_{A,j}}v_{M,j}
    \end{equation*}
    where the $j^{th}$ filter factor per $k^{th}$ step, $\phi_{j}^{(k)}$,
    is given by} 
    \begin{equation} \label{IR_DP_FF}
    \phi_{j}^{(k)} = \psi_j^{(k)} - \left(\frac{\alpha^2}{\sigma_{M,j}^2+\alpha^2}\right)\sum_{i=0}^{k-1}\left(1 - \frac{\sigma_{A,j}^2}{\sigma_{M,j}^2+\alpha^2}\right)^i\phi_{j}^{(k-1-i)}, \quad j=1,2,\dots,n
    \end{equation}
    under the assumption that $V_M^TV_A = I = V_A^TV_M$.
\end{theorem}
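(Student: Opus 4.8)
The plan is to prove the claim by strong induction on $k$, leveraging the reformulation in Lemma \ref{IR_reform} together with the diagonalization identity for the operator $\alpha^2(M^TM)^{-1}(I-A^TA(M^TM)^{-1})^i$ that was worked out in the passage surrounding \eqref{coro_pt2}. The two assertions of the theorem---that $x^{(k)}_{IR}$ admits a filtered representation in the basis $\{v_{M,j}\}$ with coefficients proportional to $u_{A,j}^Tb/\sigma_{A,j}$, and that the associated filter factors obey the recursion \eqref{IR_DP_FF}---are logically the same statement, so it suffices to establish the filtered form and then read off the coefficient of $\frac{u_{A,j}^Tb}{\sigma_{A,j}}v_{M,j}$.

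The induction hypothesis I would carry is that for every $m$ with $0 \le m \le k-1$ the iterate can be written as $x^{(m)}_{IR} = \sum_{j=1}^n \phi_j^{(m)}\frac{u_{A,j}^Tb}{\sigma_{A,j}}v_{M,j}$ for real scalars $\phi_j^{(m)}$; strong induction is needed because \eqref{IR_DP_FF} couples $\phi_j^{(k)}$ to all the preceding $\phi_j^{(0)},\dots,\phi_j^{(k-1)}$. The base cases $k=0$ (where $x^{(0)}_{IR}=0$, so $\phi_j^{(0)}=0$) and $k=1,2$ are already verified in the discussion surrounding \eqref{coro_pt1}. For the inductive step I would begin from \eqref{thm1_relation}, substitute the representation from Proposition \ref{prop1} for $x^{(k)}_{PL}$, and substitute the induction hypothesis for each $x^{(k-1-i)}_{IR}$ in the summation, noting that as $i$ runs from $0$ to $k-1$ the index $k-1-i$ runs over $0,\dots,k-1$ and is therefore covered by the hypothesis.

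The crux is the observation that $\alpha^2(M^TM)^{-1}(I-A^TA(M^TM)^{-1})^i$ acts \emph{diagonally} on any vector expressed in this common representation. Using the SVD of $A$, the definition $M=D_MV_M^T$, and the orthogonality assumption $V_M^TV_A=I=V_A^TV_M$, one factors $(I-A^TA(M^TM)^{-1})^i = V_A(I-\tilde{D}_M)^iV_M^T$ exactly as in the computation of \eqref{coro_pt2}, whence $\alpha^2(M^TM)^{-1}(I-A^TA(M^TM)^{-1})^i = \alpha^2 V_M D_M^{-2}(I-\tilde{D}_M)^iV_M^T$. Applied to a filtered iterate $\sum_j \phi_j^{(m)}\frac{u_{A,j}^Tb}{\sigma_{A,j}}v_{M,j}$, this multiplies the $j$th coefficient by $\frac{\alpha^2}{\sigma_{M,j}^2+\alpha^2}\bigl(1-\frac{\sigma_{A,j}^2}{\sigma_{M,j}^2+\alpha^2}\bigr)^i$, precisely the scalar identified before the theorem, now acting on $x^{(k-1-i)}_{IR}$ rather than on $x^{(k)}_{PL}$. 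Collecting terms over the common basis yields $x^{(k)}_{IR} = \sum_j\bigl(\psi_j^{(k)} - \frac{\alpha^2}{\sigma_{M,j}^2+\alpha^2}\sum_{i=0}^{k-1}(1-\frac{\sigma_{A,j}^2}{\sigma_{M,j}^2+\alpha^2})^i\phi_j^{(k-1-i)}\bigr)\frac{u_{A,j}^Tb}{\sigma_{A,j}}v_{M,j}$, and equating the bracketed coefficient with $\phi_j^{(k)}$ gives \eqref{IR_DP_FF}, closing the induction.

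The main obstacle I anticipate is not a hard calculation but the bookkeeping needed to justify the diagonalization step for each $x^{(k-1-i)}_{IR}$: the pre-theorem derivation established the scalar action only for the single vector $x^{(k)}_{PL}$, so I must argue that the identical factorization holds verbatim for any vector lying in the span of $\{v_{M,j}\}$ with the stated coefficient structure. This reduces to the fact that the operator is simultaneously diagonalized in the $V_M$ basis, which is exactly what $V_M^TV_A=I$ guarantees; once that is granted, the remaining work is routine regrouping of the geometric-type sums over the common index $j$.
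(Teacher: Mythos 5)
Your proposal is correct and takes essentially the same route as the paper: the paper's own justification is exactly the recursive unrolling you formalize, namely Lemma~\ref{IR_reform} combined with Proposition~\ref{prop1} and the diagonalization $\alpha^2(M^TM)^{-1}\left(I-A^TA(M^TM)^{-1}\right)^i = \alpha^2 V_M D_M^{-2}\left(I-\tilde{D}_M\right)^i V_M^T$, worked out explicitly for $k=1,2$ and then asserted in general (``continuing this recursive process''), so your strong induction merely makes that sketch rigorous. The bookkeeping point you flag is resolved exactly as you anticipate: since the operator is diagonal in the $V_M$ basis under the assumption $V_M^TV_A = I = V_A^TV_M$, its scalar action on any iterate of the filtered form is immediate, not just on $x^{(k)}_{PL}$.
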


We comment that the filter factors of IR on the Tikhonov problem computed in exact arithmetic when $M^TM=A^TA+\alpha^2I$ are exactly those of Tikhonov regularization. This is stated in Corollary \ref{TikhonovFF_IR} below without proof as the argument follows from a straight forward mathematical induction argument.

\begin{corollary}{(Filter factors of IR on the Tikhonov problem in exact arithmetic)} \label{TikhonovFF_IR} \\
    In exact arithmetic the filter factors of IR on the Tikhonov problem coincide with the filter factors of standard Tikhonov regularization \eqref{TikClosedForm} for all $k=0,1,2,\dots$ given by
    \begin{equation} \label{TikFF}
    \phi^{(k)}_j = \frac{\sigma^2_j}{\sigma^2_j+\alpha^2}, \quad \text{for}\,\,\, j=1,2,\dots,n.
    \end{equation}
\end{corollary}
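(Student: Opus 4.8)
The plan is to specialize the general recursion \eqref{IR_DP_FF} of Theorem \ref{thm1} to the exact-arithmetic setting and then run a short induction on $k$, treating each index $j$ independently. The first step is to record what exact arithmetic means for the quantities in \eqref{IR_DP_FF}. Taking $M^TM = A^TA + \alpha^2 I$ in \eqref{MM} forces $V_M = V_A$ and $\sigma_{M,j} = \sigma_{A,j}$, so the orthogonality hypothesis $V_M^TV_A = I$ holds exactly; I will henceforth drop the $A$ and $M$ subscripts and write $\sigma_j$. It is convenient to set $\rho_j := \alpha^2/(\sigma_j^2+\alpha^2)$. With these identifications the preconditioned Landweber filter factor \eqref{FF_preconLand} collapses to $\psi_j^{(k)} = 1 - \rho_j^{\,k}$, the ratio $1 - \sigma_{A,j}^2/(\sigma_{M,j}^2+\alpha^2)$ inside the sum equals $\rho_j$, and the prefactor $\alpha^2/(\sigma_{M,j}^2+\alpha^2)$ equals $\rho_j$ as well. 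Hence \eqref{IR_DP_FF} becomes the scalar recursion
\begin{equation*}
\phi_j^{(k)} = 1 - \rho_j^{\,k} - \rho_j \sum_{i=0}^{k-1} \rho_j^{\,i}\,\phi_j^{(k-1-i)}.
\end{equation*}

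Next I would carry out induction on $k$ with the convention $\phi_j^{(0)} = 0$ coming from the zero initial guess $x^{(0)}_{IR} = 0$. For the base case $k=1$ the sum contributes only $\rho_j^{0}\phi_j^{(0)} = 0$, so $\phi_j^{(1)} = 1 - \rho_j = \sigma_j^2/(\sigma_j^2+\alpha^2)$, which is exactly \eqref{TikFF} and agrees with the already-established identity $x^{(1)}_{IR} = x^{(1)}_{PL}$. For the inductive step, assume $\phi_j^{(m)} = 1 - \rho_j$ for every $1 \le m \le k-1$. In the sum $\sum_{i=0}^{k-1}\rho_j^{\,i}\phi_j^{(k-1-i)}$ the final term ($i = k-1$) is $\rho_j^{\,k-1}\phi_j^{(0)} = 0$, while the remaining terms ($0 \le i \le k-2$) each carry the factor $\phi_j^{(k-1-i)} = 1 - \rho_j$. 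Pulling this constant out and summing the resulting geometric series gives $\sum_{i=0}^{k-1}\rho_j^{\,i}\phi_j^{(k-1-i)} = (1-\rho_j)\sum_{i=0}^{k-2}\rho_j^{\,i} = 1 - \rho_j^{\,k-1}$. Substituting back yields $\phi_j^{(k)} = 1 - \rho_j^{\,k} - \rho_j(1 - \rho_j^{\,k-1}) = 1 - \rho_j = \sigma_j^2/(\sigma_j^2+\alpha^2)$, completing the induction.

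The calculation is entirely elementary, so the only real obstacle is bookkeeping: one must keep the two powers of $\rho_j$ straight (the $\rho_j^{\,k}$ coming from $\psi_j^{(k)}$ versus the $\rho_j^{\,i}$ generated by the operator $(I - A^TA(M^TM)^{-1})^i$) and correctly track the index range so that the $i = k-1$ term, which multiplies the vanishing $\phi_j^{(0)}$, is discarded before the geometric sum is evaluated. I would also flag the $k=0$ endpoint: with a zero initial guess $\phi_j^{(0)} = 0$ rather than $\sigma_j^2/(\sigma_j^2+\alpha^2)$, so the statement is really an assertion for $k \ge 1$, namely that the refinement reaches the Tikhonov filter factors after the first step and is stationary thereafter. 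Because the recursion decouples across $j$, no coupling between singular components ever arises and the per-index argument above suffices.
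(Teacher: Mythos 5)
Your proof is correct and is precisely the ``straightforward mathematical induction argument'' the paper invokes when stating Corollary \ref{TikhonovFF_IR} without proof: you specialize \eqref{IR_DP_FF} to the exact-arithmetic identifications $\sigma_{M,j}=\sigma_{A,j}$ and $V_M=V_A$, reduce to a scalar recursion in $\rho_j=\alpha^2/(\sigma_j^2+\alpha^2)$, and close the induction with a geometric sum, exactly as intended. Your observation that, under the zero initial guess, $\phi_j^{(0)}=0$ so the claim genuinely holds for $k\ge 1$ (the filter factors reach the Tikhonov values after one step and are stationary thereafter) is a fair and correct sharpening of the paper's ``for all $k=0,1,2,\dots$''.
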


\subsection{Mixed precision IR on the Tikhonov problem} \label{sec4.2}

We now turn to the determination of the filter factors of IR when computing in multiple precisions. We extend the result of Theorem \ref{thm1} to the case when the $k^{th}$ iterate and its corresponding filter factors are to be computed in up to three precisions. When discussing computations in mixed precision we parallel the convention of recent works, e.g., \cite{Higham1,Higham22}, by using the notation 
\begin{equation*}
    \text{Pr}_1 \leq \text{Pr}_2 \leq \text{Pr}_3
\end{equation*}
to represent the three precisions used. Here, Pr$_3$ denotes the highest precision that is used in residual updates, Pr$_2$ denotes the `working precision' used for linear systems solves, and Pr$_1$ denotes the precision used to represent the preconditioner $M$. In Section \ref{sec5} we define the precisions we consider for our numerical experiments. \Rev{We comment that a finite precision error analysis could be considered to better understand the error accumulation from use of three different precisions; such a treatment is beyond the scope of this work.}

\rev{We now consider the reformulation of IR on the Tikhonov problem in mixed precision so as to determine its corresponding filter factors. In three precisions the $k^{th}$ solution of \eqref{IR_recursive} may formulated as
\begin{equation} \label{MP_IR_relation}
    x^{(k)}_{IR} = \underbrace{x^{(k-1)}_{IR} + {\underbrace{\vphantom{\Big|}\left(M^TM\right)}_{\text{Pr}_1}}^{-1}\underbrace{\left[A^Tr^{(k-1)} - \alpha^2 x^{(k-1)}_{IR}\right]}_{\text{Pr}_3}}_{\text{Pr}_2}
\end{equation}
where we utilize braces to visually denote the usage of mixed precision.
This mixed precision variant of Algorithm \ref{alg_IRTik} is provided in Algorithm \ref{alg_3lvlIR} for reference. Recall that the $k^{th}$ solution of \eqref{IR_recursive} can be written as a recursive relation involving the $k^{th}$ iterate of preconditoned Landweber and the previous $k-1$ iterates of IR on the Tikhonov problem. In Lemma \ref{IR_reform_MP} we reformulate \eqref{IR_recursive} to write the $k^{th}$ iterate of the IR problem as a filtered solution whose constituent parts are computed in precisions that mimic their partnered forms of \eqref{MP_IR_relation} and Algorithm \ref{alg_3lvlIR}.}

\begin{algorithm}
\caption{Mixed precision iterative refinement on the Tikhonov problem}
\label{alg_3lvlIR}
\algsetup{indent=2em,linenodelimiter=}
\begin{algorithmic}[1]
\STATE{\text{\bf{Precision Levels:}} $\text{Pr}_1 \leq \text{Pr}_2 \leq \text{Pr}_3$}
\STATE{\rev{\text{\bf{Input:}} $A \in \mathbb{R}^{m \times n},\,\, x_{IR}^{(0)} \in \mathbb{R}^n$}}
\STATE{\rev{\text{\bf{Output:}} $x_{IR}^{(k+1)} \in \mathbb{R}^n$}}
\STATE{Compute $M^TM = \left(A^TA + \alpha^2 I\right);$ \hfill $[\text{in } \text{Pr}_1]$}
\FOR{$k = 0,1,2,\dots$}
    \STATE{\rev{$r^{(k)} = b - Ax_{IR}^{(k)};$ \hfill $[\text{in } \text{Pr}_3]$}}
    \STATE{\rev{$s^{(k)} = A^Tr^{(k)}-\alpha^2x_{IR}^{(k)};$ \hfill $[\text{in } \text{Pr}_3]$}}
    \STATE{Solve $\left(M^TM\right)h^{(k)} = s^{(k)};$ \hfill $[\text{in } \text{Pr}_2]$}
    \STATE{\rev{$x_{IR}^{(k+1)} = x_{IR}^{(k)} + h^{(k)};$ \hfill $[\text{in } \text{Pr}_2]\,\,$}}
\ENDFOR
\end{algorithmic}
\end{algorithm}

\begin{lemma}{(Reformulation of IR on the Tikhonov problem for mixed precision)\\} \label{IR_reform_MP}
    The $k^{th}$ solution of IR on the Tikhonov problem, $x_{IR}^{(k)}$, in precisions Pr$_1$, Pr$_2$, and Pr$_3$ may be written with respect to the $k^{th}$ solution of \rev{preconditioned Landweber}, $x^{(k)}_{PL}$. \rev{Precisely,
    \begin{equation} \label{lemma2_result}
        \begin{split}
            &x^{(k)}_{IR} = x^{(k-1)}_{IR} + \left(M^TM\right)^{-1}A^Tr^{(k-1)} - \alpha^2 \left(M^TM\right)^{-1}x^{(k-1)}_{IR} \\
            &= \underbrace{x^{(k-1)}_{IR} + {\underbrace{\vphantom{\Big|}\left(M^TM\right)}_{\text{Pr}_1}}^{-1}\underbrace{\left[\left(M^TM\right)\left[x^{(k)}_{PL} - x^{(k-1)}_{PL}\right]+ \Gamma-\alpha^2 x_{IR}^{(k-1)}\right]}_{\text{Pr}_3}}_{\text{Pr}_2}
        \end{split}
    \end{equation}
    where} $\Gamma = A^TA\alpha^2 \left(M^TM\right)^{-1}\sum_{i=0}^{k-2}\left(I - A^TA\left(M^TM\right)^{-1}\right)^ix_{IR}^{(k-2-i)}$.
\end{lemma}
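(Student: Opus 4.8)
The plan is to prove the second equality in \eqref{lemma2_result}, since the first is merely the defining recursion \eqref{IR_recursive}. Because the terms $x^{(k-1)}_{IR}$ and $-\alpha^2(M^TM)^{-1}x^{(k-1)}_{IR}$ appear verbatim on both sides (the latter after distributing $(M^TM)^{-1}$ across the bracket), it suffices to establish the single identity
\begin{equation*}
    A^Tr^{(k-1)} = \left(M^TM\right)\left[x^{(k)}_{PL}-x^{(k-1)}_{PL}\right] + \Gamma,
\end{equation*}
after which left-multiplication by $(M^TM)^{-1}$ and reinsertion of the common terms yields the claim. Both sides are written in exact arithmetic, so the result is a purely algebraic rearrangement; the precision annotations in \eqref{lemma2_result} only record which factor is evaluated in which precision and do not enter the identity itself.

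The key observation I would exploit is that $\Gamma$ is exactly $A^TA$ applied to the accumulated correction term already analyzed in Lemma \ref{IR_reform}. Writing the local abbreviation $\Delta^{(k-1)} = \alpha^2(M^TM)^{-1}\sum_{i=0}^{k-2}\left(I-A^TA(M^TM)^{-1}\right)^i x^{(k-2-i)}_{IR}$, the definition of $\Gamma$ gives $\Gamma = A^TA\,\Delta^{(k-1)}$. Applying Lemma \ref{IR_reform} at index $k-1$ yields $x^{(k-1)}_{IR} = x^{(k-1)}_{PL} - \Delta^{(k-1)}$, hence $\Delta^{(k-1)} = x^{(k-1)}_{PL} - x^{(k-1)}_{IR}$ and therefore
\begin{equation*}
    \Gamma = A^TA\left(x^{(k-1)}_{PL}-x^{(k-1)}_{IR}\right).
\end{equation*}

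With this in hand I would evaluate the right-hand side directly. The preconditioned Landweber recursion \eqref{preconLand} gives $x^{(k)}_{PL}-x^{(k-1)}_{PL} = (M^TM)^{-1}A^T(b-Ax^{(k-1)}_{PL})$, so $\left(M^TM\right)\left[x^{(k)}_{PL}-x^{(k-1)}_{PL}\right] = A^Tb - A^TAx^{(k-1)}_{PL}$. Adding $\Gamma$ cancels the $\pm A^TAx^{(k-1)}_{PL}$ terms and leaves $A^Tb - A^TAx^{(k-1)}_{IR} = A^T(b-Ax^{(k-1)}_{IR}) = A^Tr^{(k-1)}$, which is precisely the left-hand side and completes the argument.

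The only real subtlety, and the step I expect to require the most care, is the bookkeeping that identifies $\Gamma$ with $A^TA(x^{(k-1)}_{PL}-x^{(k-1)}_{IR})$: one must align the summation range $i=0,\dots,k-2$ and the iterate indices $x^{(k-2-i)}_{IR}$ in the definition of $\Gamma$ against the correction term of Lemma \ref{IR_reform} evaluated at step $k-1$ rather than $k$. Once that alignment is made, no fresh induction is needed, since Lemma \ref{IR_reform} already supplies the closed form for the accumulated corrections, and the remainder reduces to the short cancellation above.
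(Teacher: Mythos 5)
Your proposal is correct and takes essentially the same approach as the paper's proof: both rest on applying Lemma \ref{IR_reform} at index $k-1$ to identify $\Gamma = A^TA\bigl(x^{(k-1)}_{PL}-x^{(k-1)}_{IR}\bigr)$ and on inverting the preconditioned Landweber step \eqref{preconLand} to write $A^Tr^{(k-1)}_{PL} = \left(M^TM\right)\left[x^{(k)}_{PL}-x^{(k-1)}_{PL}\right]$. The only difference is directional and cosmetic---you verify the key identity by simplifying the right-hand side while the paper rewrites the left-hand side---and your index alignment for the summation in $\Gamma$ is exactly right.
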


\begin{proof}
We begin by considering the IR expansion about its $(k-1)^{th}$ residual
\begin{equation} \label{thm2_eq1}
        x^{(k)}_{IR} = x^{(k-1)}_{IR} + \left(M^TM\right)^{-1}A^T\left(b-Ax_{IR}^{(k-1)}\right) - \alpha^2 \left(M^TM\right)^{-1}x^{(k-1)}_{IR}.
\end{equation}
Using Lemma \ref{IR_reform} we may rewrite the residual of $x^{(k-1)}_{IR}$ in \eqref{thm2_eq1} instead with respect to $x^{(k-1)}_{PL}$
\begin{equation*} \label{thm2_eq2}
    \begin{split}
        x^{(k)}_{IR} &= x^{(k-1)}_{IR} + \left(M^TM\right)^{-1}A^T\left(b-Ax_{IR}^{(k-1)}\right) - \alpha^2 \left(M^TM\right)^{-1}x^{(k-1)}_{IR} \\
        &= x^{(k-1)}_{IR} + \left(M^TM\right)^{-1}A^Tr_{PL}^{(k-1)} + \left(M^TM\right)^{-1}\Gamma - \alpha^2 \left(M^TM\right)^{-1}x^{(k-1)}_{IR}\\
        &= x^{(k-1)}_{IR} + \left(M^TM\right)^{-1}\left[A^Tr_{PL}^{(k-1)} +\Gamma -\alpha^2 x^{(k-1)}_{IR} \right]
    \end{split}
\end{equation*}
where $r^{(k-1)}_{PL}$ denotes the $(k-1)^{th}$ residual of preconditioned Landweber solution given by \eqref{preconLand_wFF_3prec}. To compute $x^{(k)}_{IR}$ with respect to its own prior solutions and $x_{PL}^{(q)}$ for $q=0,1,\dots,k$, we rewrite $A^Tr_{PL}^{(k-1)}$ using \eqref{preconLand} as follows
\begin{equation*}
    x_{PL}^{(k)} = x_{PL}^{k-1} + \left(M^TM\right)^{-1}A^Tr_{PL}^{(k-1)} \\
    \iff \\
    A^Tr_{PL}^{(k-1)} = \left(M^TM\right)\left[x_{PL}^{(k)}-x_{PL}^{(k-1)}\right].
\end{equation*}
Here, we point out that by construction $M^TM$ is invertible. With this substitution the result follows. \hfill $\qed$
\end{proof}

We comment that during the computation of the $k^{th}$ iterate of \eqref{lemma2_result} that the product $\left(M^TM\right)\left[x^{(k)}_{PL}-x^{(k-1)}_{PL}\right]$ is computed in Pr$_3$ but $\left(M^TM\right)$ is computed in Pr$_1$. Thus, before presenting the mixed precision generalization of Theorem \ref{thm1}, we present in Proposition \ref{prop2} the \Rev{mixed precision version of} Proposition \ref{prop1}. In Algorithm \ref{alg_mixedPrecPreconLand} we provide the steps for computing preconditioned Landweber where we note that the preconditioner $\left(M^TM\right)$ is an input and \Rev{Pr$_3$ is used uniformly (as per Lemma \ref{IR_reform_MP})}. The framework for the computations of the filter factors is summarized in Algorithm \ref{alg_5_preconLand_3prec}.

\begin{algorithm}
\caption{\Rev{Mixed} precision preconditioned Landweber}
\label{alg_mixedPrecPreconLand}
\algsetup{indent=2em,linenodelimiter=}
\begin{algorithmic}[1]
\STATE{\text{\bf{Precision Levels:}} $\text{Pr}_1 \leq \text{Pr}_2 \leq \text{Pr}_3$}
\STATE{\rev{\text{\bf{Input:}} $A \in \mathbb{R}^{m \times n},\,\, b\in \mathbb{R}^m,\,\, x_{PL}^{(0)} = 0 \in \mathbb{R}^n,\,\, \alpha>0$,\,\, $M^TM$ \text{ (in Pr$_1$)}}}\\
\STATE{\rev{\text{\bf{Output:}} $x_{PL}^{(k+1)} \in \mathbb{R}^n$}}
\FOR{$k = 0,1,2,\dots$}
    \STATE{\rev{$s^{(k)} = A^T\left(b - Ax_{PL}^{(k)}\right)$\hfill $[\text{in } \text{Pr}_3]$}}
    \STATE{\rev{Solve $(M^TM)h^{(k)} = s^{(k)}$ \hfill $[\text{in } \text{Pr}_3]$}}
    \STATE{\rev{$x_{PL}^{(k+1)} = x_{PL}^{(k)} + h^{(k)}$ \hfill $[\text{in } \text{Pr}_3]\,\,$}}
\ENDFOR
\end{algorithmic}
\end{algorithm}

\begin{prop}{(\Rev{Mixed precision} preconditioned Landweber with filter factors)\\} \label{prop2}
The $k^{th}$ solution, $x^{(k)}_{PL}$, of \eqref{preconLand} \rev{computed in Pr$_3$} with preconditioner $M = D_MV_M^T$ as defined in Section \ref{sec2} \rev{such that $M^TM$ is computed in Pr$_1$} may be written as 
\begin{equation} \label{preconLand_wFF_3prec}
	\Rev{x^{(k)}_{PL}
        = \sum_{j=1}^n \psi_j^{(k)}\frac{u^T_{A,j}b}{\sigma_{A,j}}v_{M,j}}
\end{equation}
with filter factors per $k^{th}$ step given by 
\begin{equation} \label{FF_preconLand3prec}
\Rev{
\psi_j^{(k)} = \underbrace{\psi_j^{(k-1)} + \sigma_{A,j}^2
(\underbrace{\sigma^2_{M,j}+\alpha^2}_{Pr_1})^{-1}
\Big(1-\sigma_{A,j}^2(\sigma^2_{M,j}+\alpha^2)^{-1}\Big)^{k-1}}_{Pr_3}
}
\end{equation}
\Rev{for $j=1,2,\dots,n$} under the assumption that $V_M^TV_A = I = V_A^TV_M$.
\end{prop}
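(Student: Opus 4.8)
The plan is to prove Proposition \ref{prop2} by carrying out a single step of the preconditioned Landweber recursion \eqref{preconLand} and reading off how the filter factors update, rather than re-deriving a closed form from scratch. Since Proposition \ref{prop1} already supplies $x^{(k-1)}_{PL} = \sum_{j=1}^n \psi_j^{(k-1)} \frac{u_{A,j}^Tb}{\sigma_{A,j}} v_{M,j}$, the task reduces to showing that the correction term $(M^TM)^{-1}A^T(b - Ax^{(k-1)}_{PL})$ contributes exactly $\frac{\sigma_{A,j}^2}{\sigma_{M,j}^2+\alpha^2}\left(1 - \frac{\sigma_{A,j}^2}{\sigma_{M,j}^2+\alpha^2}\right)^{k-1}$ to the $j$th filter factor. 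The variable-precision content is essentially bookkeeping: the preconditioner $M^TM$ is formed in $\text{Pr}_1$ and so carries its own singular values $\sigma_{M,j}$, kept formally distinct from the $\sigma_{A,j}$ arising from the $\text{Pr}_3$ products with $A$; in exact arithmetic they coincide and one recovers Proposition \ref{prop1}.

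First I would substitute the Proposition \ref{prop1} expansion of $x^{(k-1)}_{PL}$ into \eqref{preconLand}. The three algebraic facts driving the computation all follow from the SVD $A = U_A\Sigma_A V_A^T$, the factorization $M^TM = V_M D_M^2 V_M^T$ with $D_M^2 = \Sigma_M^2 + \alpha^2$, and the orthogonality assumption $V_M^TV_A = I = V_A^TV_M$: namely $A v_{M,j} = \sigma_{A,j} u_{A,j}$, $A^T u_{A,j} = \sigma_{A,j} v_{A,j}$, and $(M^TM)^{-1} v_{A,j} = (\sigma_{M,j}^2 + \alpha^2)^{-1} v_{M,j}$. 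Using the first fact I would compute $A x^{(k-1)}_{PL} = \sum_j \psi_j^{(k-1)} (u_{A,j}^Tb) u_{A,j}$, so that the $A^T$-projected residual becomes $A^T(b - Ax^{(k-1)}_{PL}) = \sum_j \sigma_{A,j}(1 - \psi_j^{(k-1)})(u_{A,j}^Tb) v_{A,j}$ via the second fact. Applying $(M^TM)^{-1}$ with the third fact then yields the correction $\sum_j \frac{\sigma_{A,j}^2}{\sigma_{M,j}^2+\alpha^2}(1 - \psi_j^{(k-1)}) \frac{u_{A,j}^Tb}{\sigma_{A,j}} v_{M,j}$. The final step is to replace $1 - \psi_j^{(k-1)}$ by its closed form from \eqref{FF_preconLand}, that is $1 - \psi_j^{(k-1)} = \left(1 - \sigma_{A,j}^2/(\sigma_{M,j}^2+\alpha^2)\right)^{k-1}$, and add the correction to $x^{(k-1)}_{PL}$; the coefficient of $\frac{u_{A,j}^Tb}{\sigma_{A,j}} v_{M,j}$ is then precisely the bracketed expression in \eqref{preconLand_wFF_3prec}, which identifies the updated filter factor \eqref{FF_preconLand3prec}.

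I expect the only genuine obstacle to be careful tracking of the two distinct bases: the correction is produced in the $v_{A,j}$ basis by $A^T$ but must be reported in the $v_{M,j}$ basis, and it is exactly the orthogonality assumption that lets $(M^TM)^{-1}$ carry $v_{A,j}$ to a scalar multiple of $v_{M,j}$. Keeping $\sigma_{M,j}$ and $\sigma_{A,j}$ separate throughout, since they collapse only in exact arithmetic, is the one point where the variable-precision setting truly differs from the derivation of Proposition \ref{prop1}, so I would be most careful there; the remaining manipulations are the same geometric-series bookkeeping already used in that proof.
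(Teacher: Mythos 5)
Your proof is correct, but it reaches the increment by a different mechanism than the paper. The paper never expands $x^{(k-1)}_{PL}$ in its filtered form; instead it rewrites $A^Tr^{(k-1)} = \left(M^TM\right)\left(x^{(k)}_{PL}-x^{(k-1)}_{PL}\right)$ and telescopes the Neumann-series expansion \eqref{prop1deriv} in the preconditioned (hat) variables, so that the difference of consecutive iterates collapses to $\left(I-\hat{A}^T\hat{A}\right)^{k-1}\hat{A}^Tb$, whose $j$th diagonal component directly yields the power factor $\left(1-\sigma_{A,j}^2/(\sigma_{M,j}^2+\alpha^2)\right)^{k-1}$ in \eqref{FF_preconLand3prec}. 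You instead work in the original variables: you substitute the Proposition \ref{prop1} expansion of $x^{(k-1)}_{PL}$ into \eqref{preconLand}, compute the residual term by term via $Av_{M,j}=\sigma_{A,j}u_{A,j}$, $A^Tu_{A,j}=\sigma_{A,j}v_{A,j}$, and $\left(M^TM\right)^{-1}v_{A,j}=(\sigma_{M,j}^2+\alpha^2)^{-1}v_{M,j}$, arriving at the intermediate recursion $\psi_j^{(k)}=\psi_j^{(k-1)}+\frac{\sigma_{A,j}^2}{\sigma_{M,j}^2+\alpha^2}\bigl(1-\psi_j^{(k-1)}\bigr)$, and then invoke the closed form \eqref{FF_preconLand} to convert $1-\psi_j^{(k-1)}$ into the required power. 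Your route is arguably more elementary and makes the one-step filter-factor update transparent, but it is an induction that leans on Proposition \ref{prop1} at two points (the expansion of the previous iterate and the closed form of its filter factors), and the substitution $1-\psi_j^{(k-1)}=\left(1-\sigma_{A,j}^2/(\sigma_{M,j}^2+\alpha^2)\right)^{k-1}$ is an exact-arithmetic identity --- which is fine for the algebraic content of the proposition, but worth flagging explicitly, since in the variable-precision reading the recursively computed $\psi_j^{(k-1)}$ of Algorithm \ref{alg_mixedPrecPreconLand} agrees with the closed form only in exact arithmetic (as the paper itself remarks after the proof). The paper's telescoping derivation buys a self-contained expression for the increment that mirrors, step by step, where each quantity of the algorithm is computed ($A^Tr^{(k-1)}$ in Pr$_3$, $d_{M,j}^2$ from Pr$_1$), which is why it is the form stated in \eqref{preconLand_wFF_3prec}; your derivation recovers exactly the same increment, so the precision bookkeeping attaches identically.
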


\begin{proof}
From \eqref{preconLand} we have that 
\begin{equation*}
    x^{(k)}_{PL} = x^{(k-1)}_{PL} + \left(M^TM\right)^{-1}A^Tr^{(k-1)}.
\end{equation*}
To write the $k^{th}$ iterate as a filtered solution we rewrite $A^Tr^{(k-1)}$ while retaining that it be computed in Pr$_3$ from Algorithm \ref{alg_mixedPrecPreconLand}. Using \eqref{preconLand} and \eqref{prop1deriv} with $\zeta=1$ we may write
\begin{equation*}
    \begin{split}
        A^Tr^{(k-1)} &= \left(M^TM\right)\left(x^{(k)}_{PL} - x^{(k-1)}_{PL}\right)\\
        &= \left(M^TM\right)\left[M^{-1}\left(\hat{x}_{PL}^{(k)}-\hat{x}_{PL}^{(k-1)}\right)\right] \\
        &= \left(M^TM\right)\left[M^{-1}\left(\sum_{i=0}^{k-1} \left(I - \hat{A}^T\hat{A}\right)^i\hat{A}^Tb - \sum_{i=0}^{k-2} \left(I - \hat{A}^T\hat{A}\right)^i\hat{A}^Tb\right)\right]\\
        &= \left(M^TM\right)M^{-1}\left(I - \hat{A}^T\hat{A}\right)^{k-1}\hat{A}^Tb.
    \end{split}
\end{equation*}
With the definition of $M$, $\hat{A}$, and the SVD of $A$ we can rewrite preconditioned Landweber using our orthogonality assumption as follows
\begin{equation*}
    \begin{split}
        x^{(k)}_{PL} &= x^{(k-1)}_{PL} + \left(M^TM\right)^{-1}A^Tr^{(k-1)}\\
        &= x^{(k-1)}_{PL} + V_MD_M^{-2}D_M\left(I-D_M^{-1}\Sigma^2_AD_M^{-1}\right)^{k-1}D_M^{-1}\Sigma_A^TU_A^Tb.
    \end{split}
\end{equation*}
By defining the $j^{th}$ component of $\left(I-D_M^{-1}\Sigma^2_AD_M^{-1}\right)^{k-1}$ by $\tilde{d}_{j}^{k-1}=(1-\sigma_{A,j}^2/\sigma^2_{M,j}+\alpha^2)^{k-1}$ which come from the computation of $A^Tr^{(k-1)}$, we may write
\begin{equation*} \label{something}
    x^{(k)}_{PL} = x^{(k-1)}_{PL} + \sum_{j=1}^n \left(\left(d^2_{M,j}\right)^{-1}\sigma^2_{A,j}\tilde{d}^{k-1}_j\right)\frac{u^T_{A,j}b}{\sigma_{A,j}}v_{M,j}.
\end{equation*}
Here, $d_{M,j}^2=(\sigma^2_{M,j}+\alpha^2)$ represents the components from the preconditioner computed in Pr$_1$ and whose component-wise action \rev{is computed in Pr$_3$.} Simplifying, we have
\begin{equation*}
        x^{(k)}_{PL}
        = \sum_{j=1}^n \left(\psi_j^{(k-1)} + \frac{\sigma_{A,j}^2}{\sigma^2_{M,j}+\alpha^2}\left(1-\frac{\sigma_{A,j}^2}{\sigma^2_{M,j}+\alpha^2}\right)^{k-1}\right)\frac{u^T_{A,j}b}{\sigma_{A,j}}v_{M,j}
\end{equation*}
where the update to $\psi_j^{(k-1)}$ is \rev{also computed in Pr$_3$.} \hfill $\qed$
\end{proof}

\begin{algorithm}
\caption{Filter factors of \Rev{mixed precision} preconditioned Landweber}
\label{alg_5_preconLand_3prec}
\algsetup{indent=2em,linenodelimiter=}
\begin{algorithmic}[1]
\STATE{\text{\bf{Precision Levels:}} $\text{Pr}_1 \leq \text{Pr}_2 \leq \text{Pr}_3$}
\STATE{\text{\bf{Input:}} \rev{$\Psi^{(0)}=0 \in \mathbb{R}^{n},\,\,\Sigma^2_A\in \mathbb{R}^{n\times n},\,\, \alpha>0$,}}
\STATE{\phantom{\hspace{1.3cm}}\rev{$\left(\sigma_{M,j}^2+\alpha^2\right)\in \mathbb{R},\quad \text{for }j=1,2,\dots,n$ \text{ (in Pr$_1$)}}}
\STATE{\text{\bf{Output:}} $\Psi^{(k+1)} \in \mathbb{R}^n$}
\FOR{$k = 0,1,2,\dots$}
\FOR{$j = 1,2,\dots,n$}
\STATE{\rev{$q_j^{(k)} = \sigma_{A,j}^2\left(1-\sigma^2_{A,j}\left(\sigma_{M,j}^2+\alpha^2\right)^{-1}\right)^{k}$ \hfill $[\text{in } \text{Pr}_3]$}}
\STATE{\rev{$g_j^{(k)} = \left(\sigma_{M,j}^2+\alpha^2\right)^{-1}q_j^{(k)}$ \hfill $[\text{in } \text{Pr}_3]$}}
\STATE{\rev{$\psi_j^{(k+1)} = \psi_j^{(k)} + g_j^{(k)}$ \hfill $[\text{in } \text{Pr}_3]\,\,$}}
\ENDFOR
\ENDFOR
\end{algorithmic}
\end{algorithm}

\rev{We comment that the filter factors $\psi^{(k)}_j$ for $j=1,2,\dots,n$ computed by Proposition \ref{prop2} are equivalent to those of Proposition \ref{prop1} in exact arithmetic. This can be observed by using relations \eqref{FF_preconLand} and \eqref{FF_preconLand3prec} to consider the difference between subsequent filter factor iterates, i.e., $\psi^{(k)}_j - \psi^{(k-1)}_j$. It's worthwhile to mention that that the recursive nature of the filter factors from Proposition \ref{prop2} is a product of the structure that arises from Algorithm \ref{alg_mixedPrecPreconLand}.}

We now describe the computation of the filter factors of IR on the Tikhonov problem in precisions Pr$_1$, Pr$_2$, and Pr$_3$. The result is provided without proof as the derivations follow in a similar manner to Theorem \ref{thm1} when starting with the result of Lemma \ref{IR_reform}. Here, we adhere to the same \rev{appropriate precision choices described at the start of the section in \eqref{MP_IR_relation} with respect to the computation of residuals, linear systems solves, and the preconditioner.}  Algorithm \ref{alg_4FFmultPrec} summarizes the framework for computing the filter factors.

\begin{theorem}{(Mixed precision IR on the Tikhonov problem with filter factors)\\} \label{thm2}
    The $k^{th}$ solution of IR on the Tikhonov problem, $x^{(k)}_{IR}$, in precisions Pr$_1$, Pr$_2$, and Pr$_3$ may be written as
    \begin{equation*}
        x^{(k)}_{IR} = \sum_{j=1}^n \phi_j^{(k)}\frac{u^T_{A,j}b}{\sigma_{A,j}}v_{M,j}
    \end{equation*}
    with the $j^{th}$ filter factor per $k^{th}$ iterative step and indicated precision given by
    \begin{align} \label{IR_MP_FF}
    \rev{\phi_{j}^{(k)} = \underbrace{\phi_{j}^{(k-1)} + {\underbrace{\vphantom{\Big|}\left(\sigma_{M,j}^2+\alpha^2\right)}_{\text{Pr}_1}}^{-1}\underbrace{\Biggl[\left(\sigma_{M,j}^2+\alpha^2\right)\left[\psi_j^{(k)}-\psi_j^{(k-1)}\right] + \Delta -\alpha^2\phi^{(k-1)}_{j}\Biggr]}_{\text{Pr}_3}}_{\text{Pr}_2}}
    \end{align}
    \rev{where 
    $$\Delta = \left(\frac{\alpha^2\sigma_{A,j}^2}{\sigma_{M,j}^2+\alpha^2}\right)\sum_{i=0}^{k-2}\left(1-\frac{\sigma_{A,j}^2}{\sigma_{M,j}^2+\alpha^2}\right)^i\phi_{j}^{(k-2-i)}$$}
    for $j=1,2,\dots,n$ under the assumption that $V_M^TV_A = I = V_A^TV_M$.
\end{theorem}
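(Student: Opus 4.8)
The plan is to mirror the derivation of Theorem \ref{thm1} but to start from the mixed-precision reformulation \eqref{lemma2_result} of Lemma \ref{IR_reform_MP} rather than from Lemma \ref{IR_reform}, proceeding by induction on $k$ under the inductive hypothesis that every iterate $x^{(q)}_{IR}$ with $q<k$ is a filtered solution of the form $\sum_{j=1}^n \phi_j^{(q)}(u_{A,j}^Tb/\sigma_{A,j})v_{M,j}$. The companion iterates $x^{(q)}_{PL}$ are already known to be filtered solutions in the same $v_{M,j}$ basis by Proposition \ref{prop2}. The goal is then to project each summand of \eqref{lemma2_result} onto $v_{M,j}$, read off the scalar recursion \eqref{IR_MP_FF}, and check that the precision annotations attached to the vector operations transfer verbatim to the scalar filter-factor operations. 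The base cases $k=0,1$ follow directly with $x^{(0)}_{IR}=x^{(0)}_{PL}=0$, since the empty $\Delta$-sum forces $\phi_j^{(1)}=\psi_j^{(1)}$, reproducing $x^{(1)}_{IR}=x^{(1)}_{PL}$.

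The mechanical engine is the set of spectral identities $M^TM=V_MD_M^2V_M^T$ from \eqref{MM} and $A^TA=V_A\Sigma_A^2V_A^T$, together with the orthogonality assumption $V_M^TV_A=I=V_A^TV_M$. First I would record that $(M^TM)^{\pm1}$ acts on $v_{M,j}$ as multiplication by $(\sigma_{M,j}^2+\alpha^2)^{\pm1}$, so the terms $x^{(k-1)}_{IR}$ and $-\alpha^2(M^TM)^{-1}x^{(k-1)}_{IR}$ contribute $\phi_j^{(k-1)}$ and $-(\sigma_{M,j}^2+\alpha^2)^{-1}\alpha^2\phi_j^{(k-1)}$ to $\phi_j^{(k)}$, respectively. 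Next, since $A^Tr^{(k-1)}_{PL}=(M^TM)(x^{(k)}_{PL}-x^{(k-1)}_{PL})$ by the substitution used in Lemma \ref{IR_reform_MP} and Proposition \ref{prop2}, its $v_{M,j}$-coefficient is exactly $(\sigma_{M,j}^2+\alpha^2)(\psi_j^{(k)}-\psi_j^{(k-1)})$; after the outer $(M^TM)^{-1}$ this reproduces the term $(\sigma_{M,j}^2+\alpha^2)^{-1}(\sigma_{M,j}^2+\alpha^2)[\psi_j^{(k)}-\psi_j^{(k-1)}]$ and explains why the factor $(\sigma_{M,j}^2+\alpha^2)$ is retained inside the bracket, namely it is formed in Pr$_3$ while the outer inverse is formed in Pr$_1$.

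The main obstacle is the projection of $\Gamma=A^TA\alpha^2(M^TM)^{-1}\sum_{i=0}^{k-2}(I-A^TA(M^TM)^{-1})^ix^{(k-2-i)}_{IR}$, where one must track the repeated swaps between the $v_A$ and $v_M$ bases induced by the alternating applications of $A^TA$ and $(M^TM)^{-1}$. Using the decomposition $(I-A^TA(M^TM)^{-1})^i=V_A(I-\tilde D_M)^iV_M^T$ established just before Theorem \ref{thm1}, applying this operator to the filtered form of $x^{(k-2-i)}_{IR}$ sends $v_{M,\ell}\mapsto(1-\sigma_{A,\ell}^2/(\sigma_{M,\ell}^2+\alpha^2))^iv_{A,\ell}$; the trailing $(M^TM)^{-1}$ and $A^TA$ inside $\Gamma$, together with $V_M^TV_A=I$, then return to $v_{A,\ell}$ with an accumulated factor $\alpha^2\sigma_{A,\ell}^2/(\sigma_{M,\ell}^2+\alpha^2)$, and the final outer $(M^TM)^{-1}$ from \eqref{lemma2_result} contributes one more $(\sigma_{M,\ell}^2+\alpha^2)^{-1}$ and converts back to $v_{M,\ell}$. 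Collecting the $\ell=j$ term shows that the $v_{M,j}$-coefficient of $(M^TM)^{-1}\Gamma$ equals $(\sigma_{M,j}^2+\alpha^2)^{-1}\Delta$ with $\Delta$ exactly as stated, the delicate points being to keep the two bases straight at each application and to verify that precisely two powers of $(\sigma_{M,j}^2+\alpha^2)^{-1}$ and one power of $\sigma_{A,j}^2$ survive. I would also note that the orthogonality assumption is what lets the scalar $(\sigma_{M,j}^2+\alpha^2)^{-1}$ represent the action of $(M^TM)^{-1}$ uniformly on both $v_{M,j}$ and $v_{A,j}$, so that the $\Delta$ term, which naturally lives in the $v_A$ basis, can be combined with the $v_M$-basis terms under a single outer factor. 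Assembling the four contributions and matching the precision braces of \eqref{lemma2_result} to those of \eqref{IR_MP_FF} completes the induction.
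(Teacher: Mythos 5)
Your proposal is correct and follows essentially the same route the paper intends: Theorem \ref{thm2} is stated there without proof precisely because, as you do, one mirrors the derivation of Theorem \ref{thm1} starting from the mixed-precision reformulation \eqref{lemma2_result} of Lemma \ref{IR_reform_MP}, with Proposition \ref{prop2} supplying the $\psi_j^{(k)}$ and the decomposition $\left(I-A^TA(M^TM)^{-1}\right)^i = V_A\bigl(I-\tilde{D}_M\bigr)^iV_M^T$ handling the $\Gamma$ term. Your bookkeeping is also accurate in the delicate spots---the coefficient of $(M^TM)^{-1}\Gamma$ coming out as $(\sigma_{M,j}^2+\alpha^2)^{-1}\Delta$, the retention of the factor $(\sigma_{M,j}^2+\alpha^2)$ inside the Pr$_3$ bracket, and the base case $\phi_j^{(1)}=\psi_j^{(1)}$---so the induction closes exactly as in \eqref{IR_MP_FF}.
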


\begin{algorithm}
\caption{Mixed precision filter factors of iterative refinement on the Tikhonov problem}
\label{alg_4FFmultPrec}
\algsetup{indent=2em,linenodelimiter=}
\begin{algorithmic}[1]
\STATE{\text{\bf{Precision Levels:}} $\text{Pr}_1 \leq \text{Pr}_2 \leq \text{Pr}_3$}
\STATE{\text{\bf{Input:}} $\Phi^{(0)}=0 \in \mathbb{R}^{n},\,\, \Psi^{(0)}=0 \in \mathbb{R}^{n},\,\,\Sigma^2_A\in \mathbb{R}^{n \times n},\,\,\alpha>0$}
\STATE{\text{\bf{Output:}} $\Phi^{(k+1)} \in \mathbb{R}^n$}
\STATE{\rev{Compute $\left(\sigma_{M,j}^2+\alpha^2\right)\in \mathbb{R},\,j=1,\dots,n$ from $M^TM=\left(A^TA+\alpha^2I\right)$ \hfill $[\text{in } \text{Pr}_1]$}}
\FOR{$k = 0,1,2,\dots$}
\STATE{\rev{Compute $\Psi^{(k+1)}$ using \textbf{Algorithm \ref{alg_5_preconLand_3prec}} \hfill $[\text{in } \text{Pr}_3]$}}
\FOR{$j = 1,2,\dots,n$}
    \STATE{$q_j^{(k)} = \left(\sigma_{M,j}^2+\alpha^2\right)\left[\psi_j^{(k+1)}-\psi_j^{(k)}\right] -\alpha^2\phi^{(k-1)}_{j}$ \hfill $[\text{in } \text{Pr}_3]$}
    \STATE{$q_j^{(k)} = q_j^{(k)} +\left(\frac{\alpha^2\sigma_{A,j}^2}{\sigma_{M,j}^2+\alpha^2}\right)\sum_{i=0}^{k-2}\left(1-\frac{\sigma_{A,j}^2}{\sigma_{M,j}^2+\alpha^2}\right)^i\phi_{j}^{(k-2-i)}$ \hfill $[\text{in } \text{Pr}_3]$}
    \STATE{$g_j^{(k)} = \left(\sigma_{M,j}^2+\alpha^2\right)^{-1}q_j^{(k)}$ \hfill $[\text{in } \text{Pr}_2]$}
    \STATE{$\phi_{j}^{(k+1)} = \phi_{j}^{(k)} + g_j^{(k)}$ \hfill $[\text{in } \text{Pr}_2]$\,\,}
\ENDFOR
\ENDFOR
\end{algorithmic}
\end{algorithm}

\section{Numerical Results and Preliminaries} \label{sec5}

In this section we provide numerical experiments to illustrate the effectiveness of mixed precision IR (MP-IR) on the Tikhonov problem and provide an exposition of the filter factor results discussed in Section \ref{sec4}. The section is organized as follows: Sections \ref{secNum2} and \ref{secNum3} provide the background for the determination of what we will refer to as effective filter factors of IR on the Tikhonov problem and an overview of the simulation of low precision computations in MATLAB, respectively. In Section \ref{secNum4} we discuss our numerical results and their associated preliminaries.

\subsection{Effective filter factors} \label{secNum2}

To assess the experimental validity of our filter factor results from Section \ref{sec4} for IR on the Tikhonov problem we compare against experimentally derived filter factors which we term \emph{effective filter factors}. Using \eqref{filteredSoln}, the effective filter factors $\omega^{(k)}_j$ of an approximate solution at the $k^{th}$ iteration may be computed as follows
\begin{equation} \label{effectiveFF}
    \omega^{(k)}_j = \frac{v_{M,j}^Tx^{(k)}_{IR}}{u^T_{A,j}b}\sigma_{A,j}, \quad j=1,2,\dots,n
\end{equation}
where we denote the vector containing the filter factors of the $k^{th}$ iterate by $\Omega^{(k)} \in \mathbb{R}^n$. We highlight that \eqref{effectiveFF} uses the right singular vectors of the preconditioner $M$ as a basis of the $k^{th}$ iterate which we commented on in our remarks following Proposition \ref{prop1}. In Section \ref{sec4} we showed that this also held true in Theorems \ref{thm1} and \ref{thm2}.

\subsection{Low precision simulation in MATLAB} \label{secNum3}

To ascertain the effectiveness of mixed precision IR on the Tikhonov problem and the experimental validity of the results presented in Section \ref{sec4} we utilize the software package \textbf{chop} introduced by Higham and Pranesh in \cite{HighamPranesh19}. The \textbf{chop} function simulates lower precision arithmetic by rounding array entries given in a MATLAB native precision (e.g., single or double) to a target precision which is stored in a higher precision with non-utilized representation bits set to zero. Chen et al. in \cite{Nagy23} considered the application of \textbf{chop} for the solution of structured inverse problems and investigated associated numerical considerations of its usage.

Various target precisions are supported by the \textbf{chop} software including user-customizable ones. The precisions we utilize in our numerical experiments in Section \ref{secNum4} and their associated shorthand notations are provided in Table \ref{choptable}. We comment that we did not experience any significant experimental differences when using precisions that did not support subnormal numbers, e.g., Google's bfloat16\footnote{https://en.wikipedia.org/wiki/Bfloat16$\_$floating-point$\_$format}, compared to those due to the IEEE standard\footnote{https://en.wikipedia.org/wiki/IEEE$\_$754$\#$2019} during our numerical investigations and therefore do not include them in our results.

\begin{table}[ht]
	\centering
    \caption{\rev{Precisions considered in the numerical results: shorthand given as an integer, bits given for the mantissa and exponent, the approximate decimal round-off error, and whether precision is IEEE standard or not.}}
    \begin{tabular}{ c | c | c | c | c | c }
        \makecell{Pr\\Shorthand} & Name & \makecell{Exponent\\Bits} & \makecell{Mantissa\\Bits} & \makecell{\rev{Approx. Decimal}\\\rev{Round-off}} & \makecell{IEEE\\Standard} \\
        \ChangeRT{1.5pt}
        $1$ & fp64 & $11$ & $52$ & $1.11e\text{-}16$ & Yes \\
        $2$ & fp32 & $8$ & $23$ & $5.96e\text{-}8$ & Yes \\
        $3$ & fp16 & $5$ & $10$ & $4.88e\text{-}4$ & Yes \\
    \end{tabular}
    \label{choptable}
\end{table}

The reported numerical results of this work were carried out in MATLAB R2022b 64-bit on a MacBook Pro laptop 
running MacOS Ventura with an Apple M2 Pro processor with @3.49 GHz and 16 GB of RAM. The computations other than those utilizing \textbf{chop} were carried out with about 15 significant decimal digits.

\subsection{Examples and Results} \label{secNum4}

For our first example we consider the 1D signal restoration problem \emph{Spectra} whose matrix models a symmetric Gaussian blur and $x$ is a simulated X-ray spectrum \cite{trussell}. The $a_{i,j}$ entries of $A\in \mathbb{R}^{64 \times 64}$ are given by
\begin{equation*} \label{spectraEntries}
    a_{i,j} = \frac{1}{\eta\sqrt{2\pi}}\exp\left(-\frac{(i-j)^2}{2\eta^2}\right),
\end{equation*}
with $\eta=2$ which results in a Toeplitz matrix.

To realistically simulate the inverse problem, noise was added to the true right-hand side, $\hat{b}$, by forming the vector $e$ with normally distributed random entries with mean zero so that $b=Ax + e$; the vector $e$ is scaled so as to correspond to a specific noise level given by $\mu = 100\left(\|e\|/\|\hat{b}\|\right)$. We will refer to $\mu$ as the noise level. The condition number of the matrix $A$ as determined by the MATLAB function \mat{cond()} is $\approx10^9$; it can also be easily verified that the singular values of this matrix decay without a significant gap.

\rev{To efficiently use \textbf{chop} in an experimental setting to simulate low precision arithmetic, the matricies $A$ we utilize herein for the numerical examples are Toeplitz structured, which, for inverse problems are often well approximated by circulant matrices; see e.g., \cite{Estatico08, DH13}. Because of this,} we compare the solution quality of MP-IR to what we will term the \emph{Approximated Iterative Refinement} (AIR) \rev{method where we use circulant approximations of $A$ within the preconditioner $M^TM$ to construct a normal preconditioning matrix which admits a spectral decomposition that can be computed efficiently. Specifically,} we let the preconditioner be $M^TM = \left(C^TC + \alpha^2I\right)$ where $C$ represents a circulant approximation to $A$. We approximate $C\approx A$ using the scheme devised by Chan in \cite{ChanOptimal} which minimizes the Frobenius norm of the difference between $A$ and $C$ amongst a family of circulants. This approximation scheme was analyzed by Strela and Tyrtyshnikov \cite{ST96} and found to be a good choice from the viewpoint of eigenvalue clustering when the vector to be reconstructed takes on many zero values.

(\textbf{\emph{Spectra - filter factors in fp64}}) - We begin \rev{with a comparison of our filter factor results presented herein} by evaluating the filtering behavior of MP-IR on the \emph{Spectra} problem contaminated by $1\%$ noise in fp64. For this experiment we chose the regularization parameter $\alpha^2 = 1e\text{-}2$; in the restoration part of this example we expand on this choice. Figure \ref{resultsFF_DP} displays the filter factors from Theorem \ref{thm1}, Theorem \ref{thm2}, and the effective filter factors compared to those of Tikhonov \eqref{TikFF} when considering all computations in fp64 after $1$ iteration. We comment that there is no visual discernible difference amongst the plots thereby experimentally verifying that: (i) the filter factors of Theorem \ref{thm1} and Theorem \ref{thm2} coincide when all floating point computations are done in fp64, i.e., when $(\text{Pr}_1,\,\text{Pr}_2,\,\text{Pr}_3) = (1,1,1)$ and (ii) that all three sets of filter factors match those of Tikhonov. Table \ref{table_FFs} numerically confirms the agreement \rev{to unit round-off} between Theorem \ref{thm2} and the effective filter factors in fp64 working precision.

\begin{figure}[ht]
\centering
\begin{minipage}{1\linewidth}
	\centering
	\begin{minipage}{0.32\linewidth}
		\centering
		\includegraphics[trim={6cm 0 6cm 0},clip,width=\linewidth]{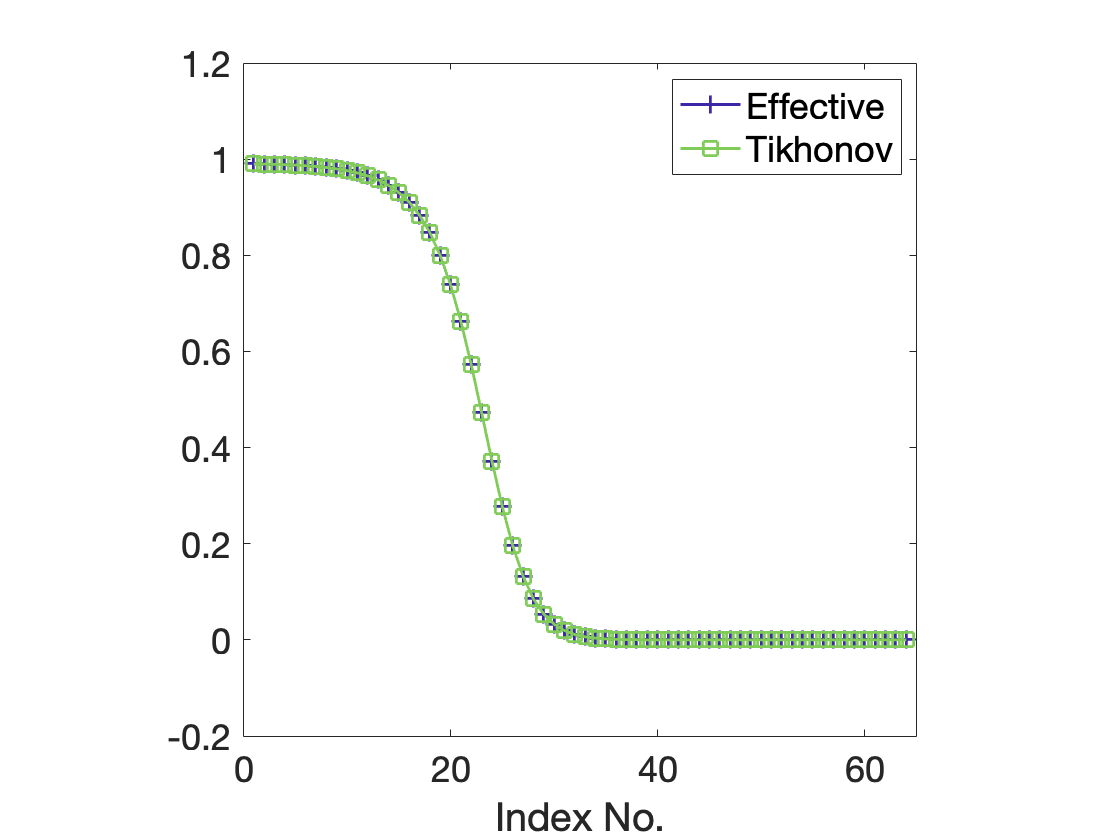}\\(a)
	\end{minipage}
	\begin{minipage}{0.32\linewidth}
		\centering
		\includegraphics[trim={6cm 0 6cm 0},clip,width=\linewidth]{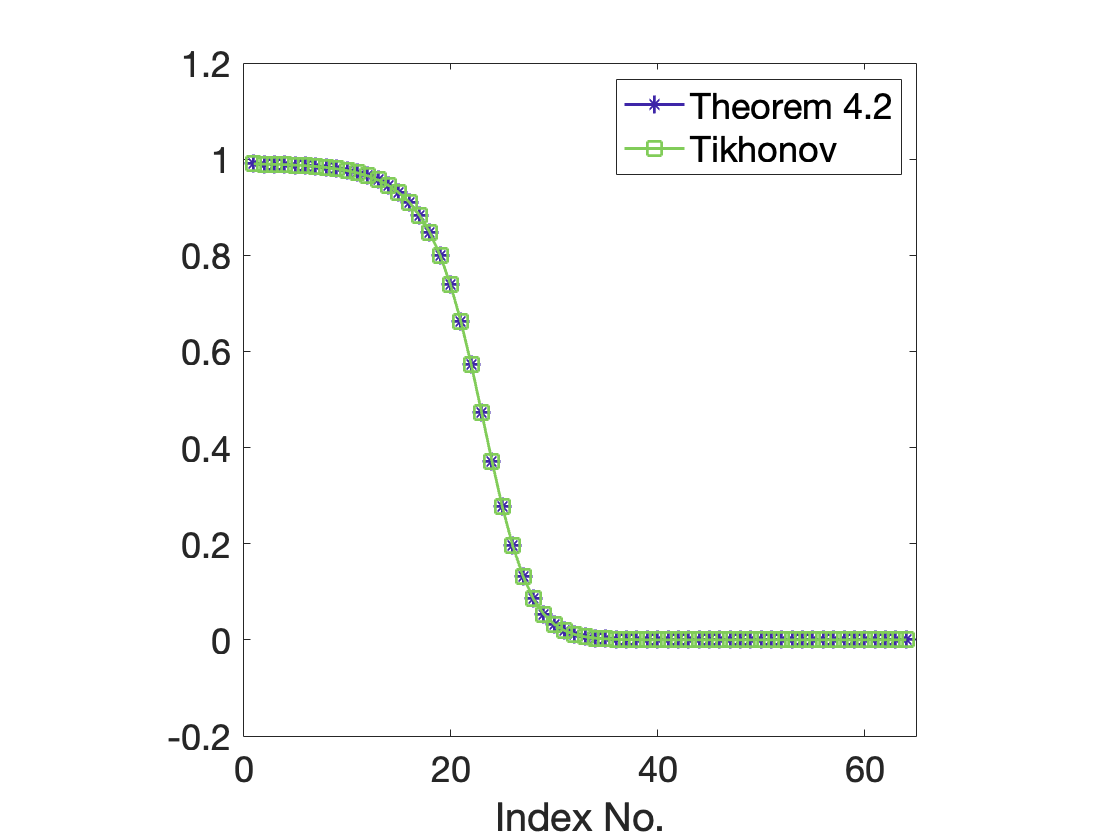}\\(b)
	\end{minipage}
	\begin{minipage}{0.32\linewidth}
		\centering
		\includegraphics[trim={6cm 0 6cm 0},clip,width=\linewidth]{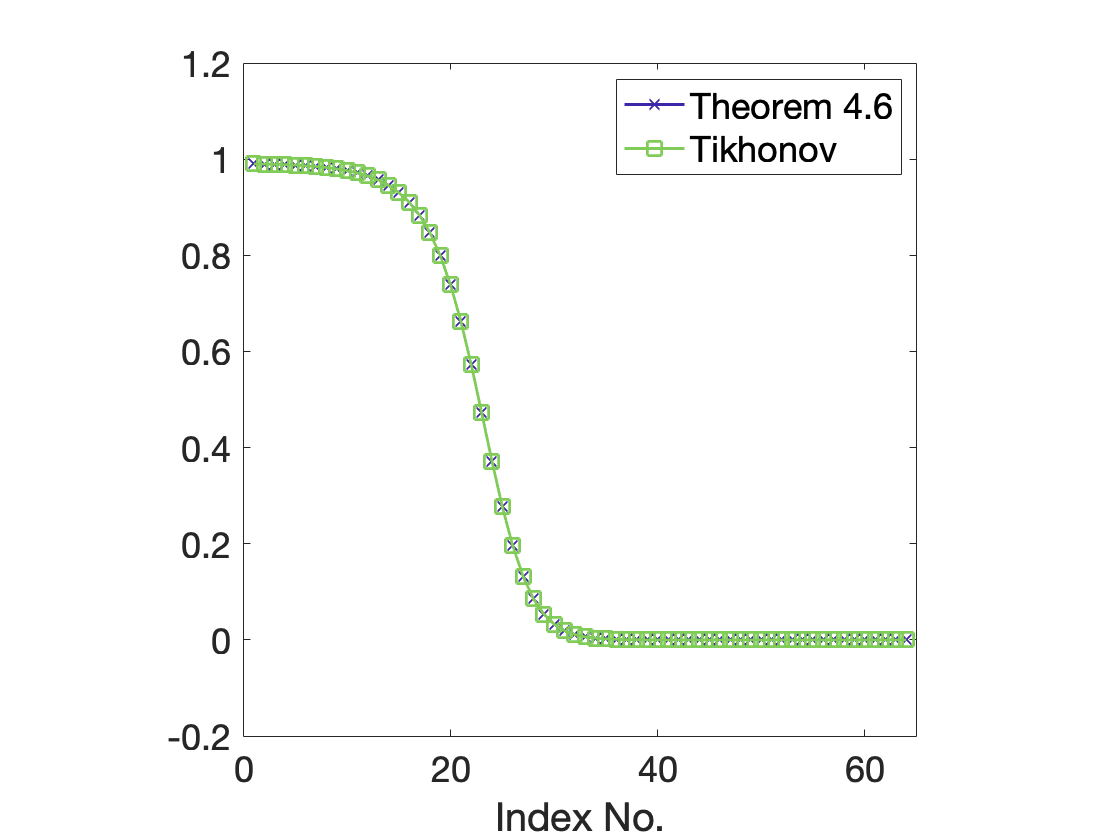}\\(c)
	\end{minipage}
\end{minipage}
\caption{\rev{\emph{Spectra} example: filter factors of standard Tikhonov \eqref{TikFF} compared against filter factors of iterative refinement computed using: (a) the effective filter factors \eqref{effectiveFF}, (b) the filter factors computed by Theorem \ref{thm1} \eqref{IR_DP_FF}, and (c) the filter factors computed by Theorem \ref{thm2} \eqref{IR_MP_FF}. The filter factors are compared using $\alpha^2 = 1e\text{-}2$ after $1$ iteration with $1\%$ noise.}}
\label{resultsFF_DP}
\end{figure}

(\emph{\textbf{Spectra - filter factors in mixed precision}}) - We utilize Theorem \ref{thm2} to investigate the filter factors of MP-IR and compare them against the effective filter factors that come from the iterates computed by Algorithm \ref{alg_3lvlIR} where again we utilize a noise level of $1\%$ and $\alpha^2=1e\text{-}2$. In Table \ref{table_FFs} we provide the summary statistics of the absolute difference between $\Phi^{(k)}$ and $\Omega^{(k)}$ of the 64 entries computed by \eqref{effectiveFF}. We note that for most precision combinations investigated, that by the $5^{th}$ iteration the mean error had converged \rev{to the unit round-off of the precision associated with Pr$_1$ (see Table \ref{choptable})}. 

We illustrate in Figure \ref{specificDiff_fig} that compared to the behavior summarized in Figure \ref{resultsFF_DP} of filter factors computed in fp64 that if the \rev{precision combination} is changed to $(\text{Pr}_1,\text{Pr}_2,\text{Pr}_3)=(3,2,1)$ that the variation of the mean entry-wise value of $|\Phi^{(k)} - \Omega^{(k)}|$ takes on approximately the unit round-off of the precision associated with the preconditioner (i.e., Pr$_1$). This can be observed in the variation of sequential entries in the top row of Figure \ref{specificDiff_fig}. Furthermore, we observe in the bottom row of the figure that the absolute largest deviation between the effective filter factors and those computed by Theorem \ref{thm2} arise in the first iteration for index numbers corresponding to a rapidly changing derivative. We surmise these differences may be a result of the orthogonality assumption made throughout or of the choice of the regularization parameter. \Rev{We close by commenting that a finite precision error analysis is expected to support these results and to help explain the impact of the preconditioner on the error.}

\begin{figure}[ht]
\centering
\begin{minipage}{1\linewidth}
	\centering
	\begin{minipage}{0.32\linewidth}
		\centering
		\includegraphics[trim={1.5cm 0 3cm 0},clip,width=\linewidth]{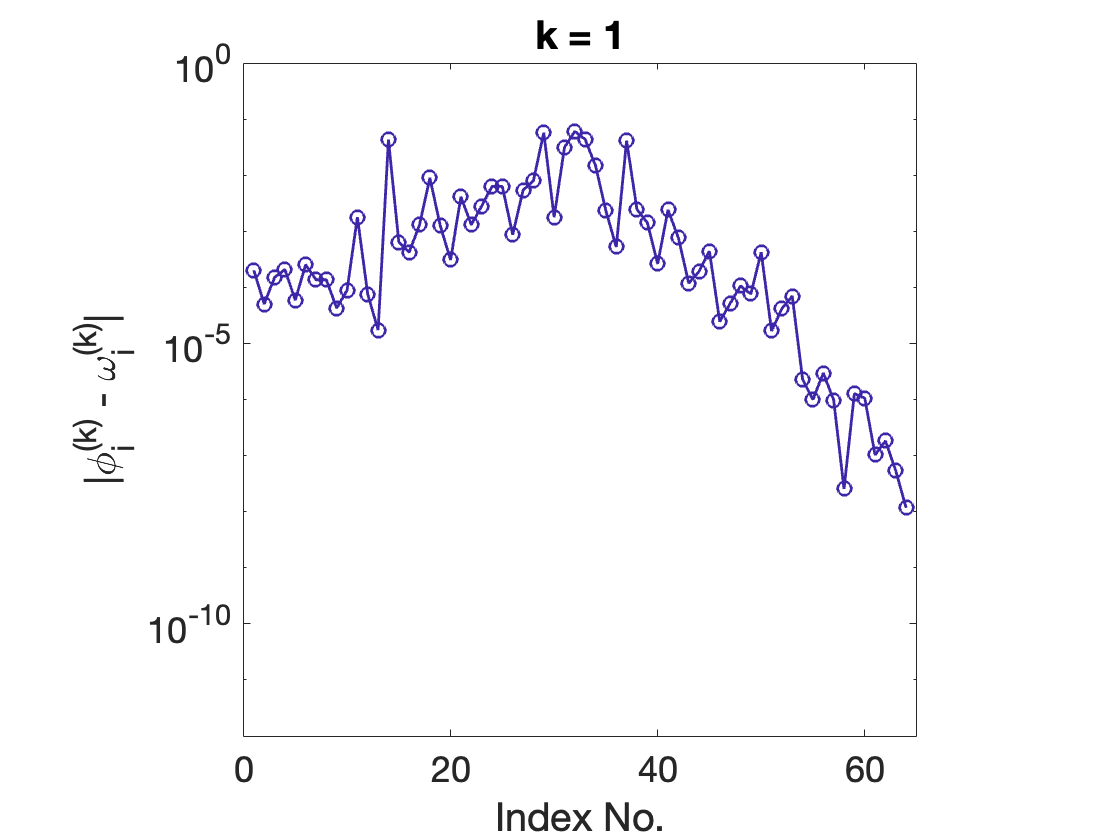}
	\end{minipage}
        \begin{minipage}{0.32\linewidth}
		\centering
		\includegraphics[trim={1.5cm 0 3cm 0},clip,width=\linewidth]{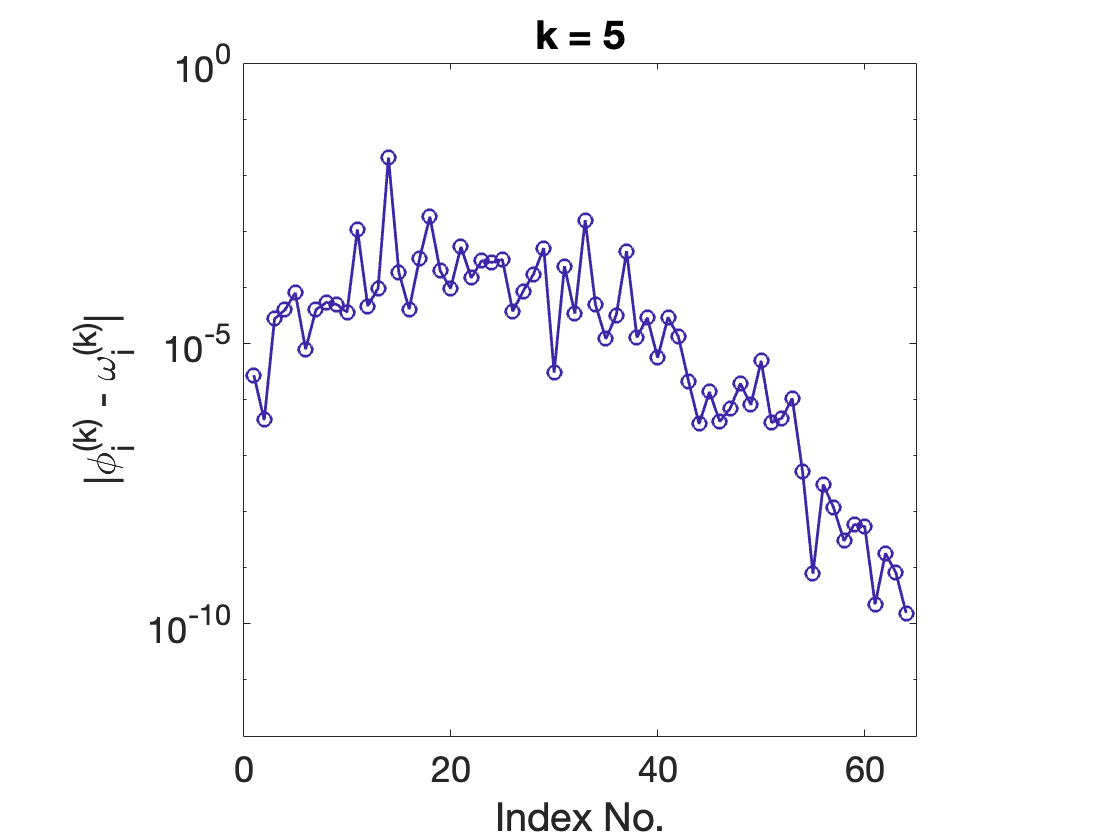}
	\end{minipage}
        \begin{minipage}{0.32\linewidth}
		\centering
		\includegraphics[trim={1.5cm 0 3cm 0},clip,width=\linewidth]{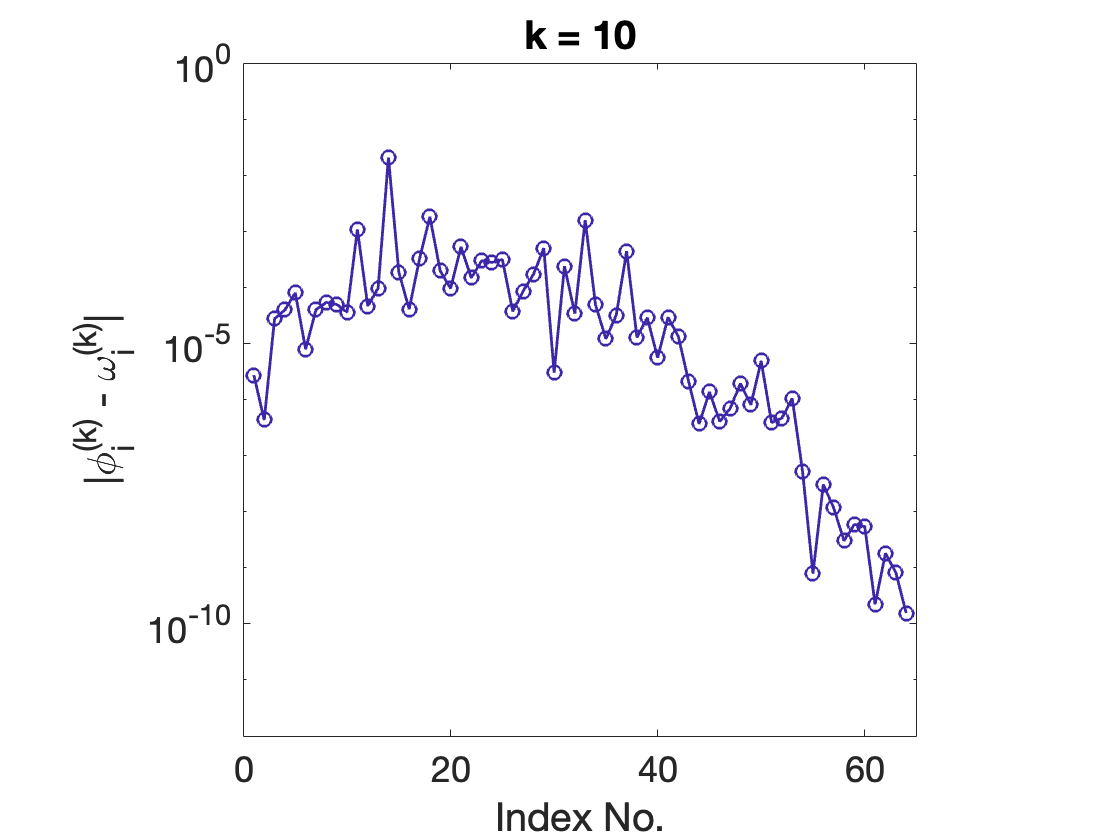}
	\end{minipage}
\begin{minipage}{0.0125\linewidth}
\centering
\phantom{\,}
\end{minipage}
        \begin{minipage}{0.31\linewidth}
		\centering
		\includegraphics[trim={5cm 0 5cm 0},clip,width=\linewidth]{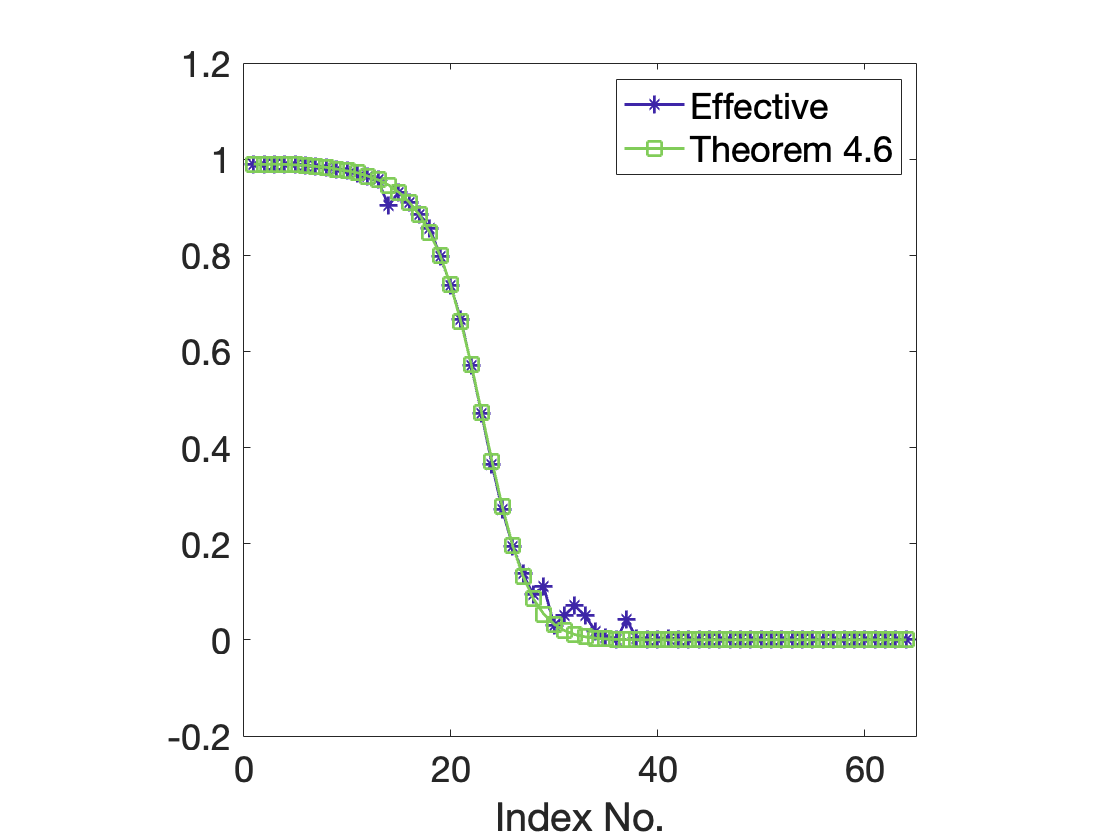}\\(a)
	\end{minipage}
 \begin{minipage}{0.005\linewidth}
\centering
\phantom{\,}
\end{minipage}
        \begin{minipage}{0.31\linewidth}
		\centering
		\includegraphics[trim={5cm 0 5cm 0},clip,width=\linewidth]{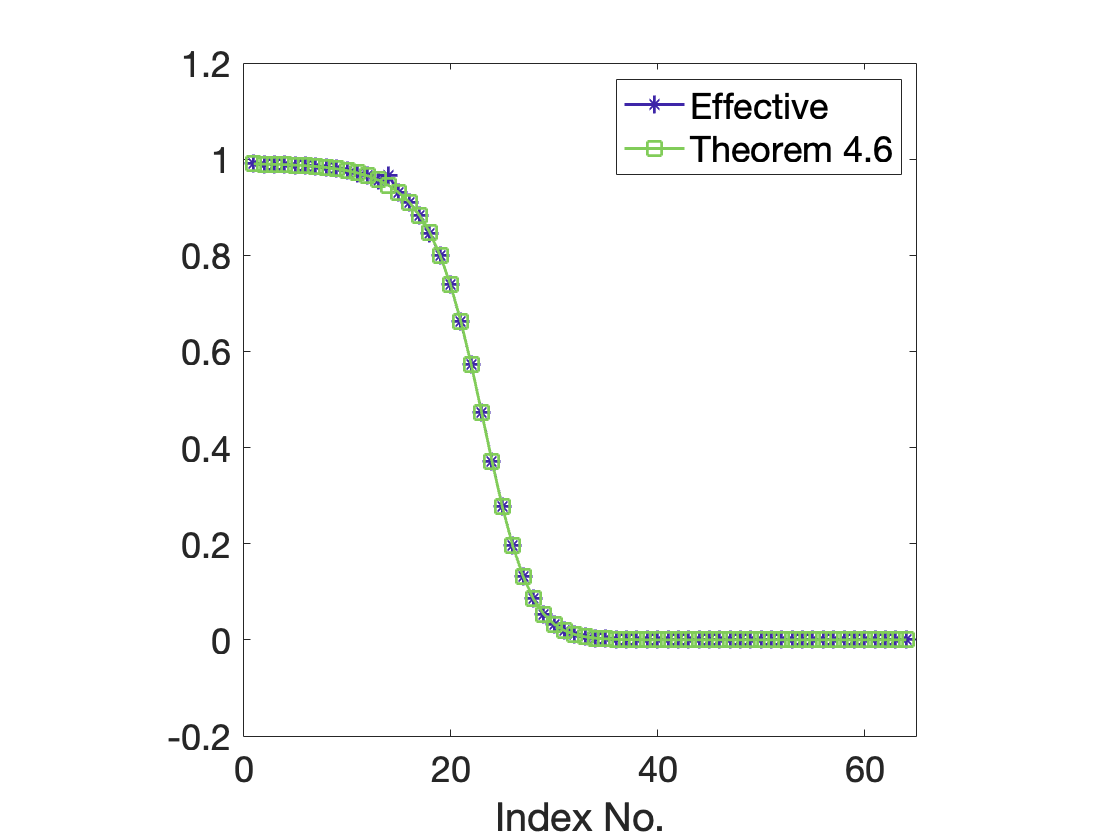}\\(b)
	\end{minipage}
 \begin{minipage}{0.005\linewidth}
\centering
\phantom{\,}
\end{minipage}
        \begin{minipage}{0.31\linewidth}
		\centering
		\includegraphics[trim={5cm 0 5cm 0},clip,width=\linewidth]{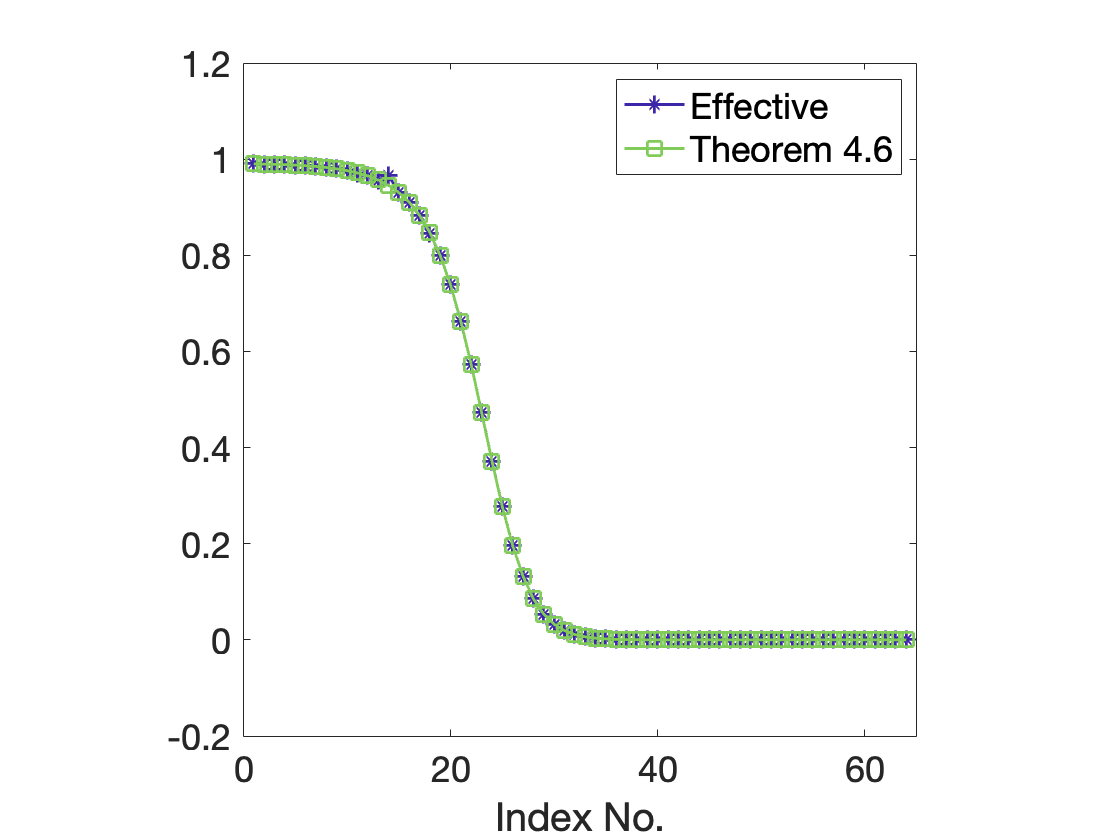}\\(c)
	\end{minipage}
\end{minipage}
\caption{\rev{MP-IR filter factors (bottom row) determined by Theorem \ref{thm2} with $(\text{Pr}_1,\text{Pr}_2,\text{Pr}_3)=(3,2,1)$ for $1\%$ noise compared to their entry-wise computations of $|\Phi^{(k)} - \Omega^{(k)}|$ (top row) at iteration numbers (a) 1, (b) 5, and (c) 10.}}
\label{specificDiff_fig}
\end{figure}

\begin{table}[ht]
        \footnotesize
        \caption{\rev{\emph{Spectra} example: summary statistics of the absolute difference between the effective filter factors and those computed by Theorem \ref{thm2} given by $|\Phi^{(k)} - \Omega^{(k)}|$ for various precision combinations with a noise level of $1\%$ and $\alpha^2=$1e-2.}}
	\centering
    \begin{tabular}{ c | c | c | c | c | c }
             \multirow{2}{*}{\scriptsize{(Pr$_1$, Pr$_2$, Pr$_3$)}} & \multirow{2}{*}{\footnotesize{Iter.}} & \multicolumn{4}{c}{\footnotesize{Summary Stats. of $|\Phi^{(k)} - \Omega^{(k)}|$}} \\
            \cline{3-6}
            & & \footnotesize{Mean Error} & \footnotesize{Min Error} & \footnotesize{Max Error} & \footnotesize{Std. Dev.} \\
        \ChangeRT{1.5pt}
        \multirow{3}{*}{\footnotesize{(1,1,1)}} & $1$& $2.0e\text{-}14$ & $1.1e\text{-}19$ & $3.8e\text{-}13$ & $6.0e\text{-}14$ \\
        \cline{3-6}
          & $5$& $4.0e\text{-}15$ & $0$ & $1.4e\text{-}13$ & $1.8e\text{-}14$ \\
        \cline{3-6}
          & $10$& $4.2e\text{-}15$ & $3.1e\text{-}21$ & $1.6e\text{-}13$ & $2.0e\text{-}14$ \\
         \hline 
          \hline
          \hline
          \multirow{3}{*}{\footnotesize{(2,1,1)}} & $1$& $2.1e\text{-}5$ & $1.3e\text{-}11$ & $3.8e\text{-}4$ & $6.7e\text{-}5$ \\
        \cline{3-6}
          & $5$& $2.7e\text{-}6$ & $2.3e\text{-}14$ & $5.2e\text{-}5$ & $7.8e\text{-}6$ \\
        \cline{3-6}
          & $10$& $2.7e\text{-}6$ & $2.3e\text{-}14$ & $5.2e\text{-}5$ & $7.8e\text{-}6$ \\
         \hline 
          \hline
          \hline
        \multirow{3}{*}{\footnotesize{(2,2,1)}} & $1$& $2.1e\text{-}5$ & $1.3e\text{-}11$ & $3.8e\text{-}4$ & $6.7e\text{-}5$ \\
        \cline{3-6}
          & $5$& $2.7e\text{-}6$ & $1.3e\text{-}13$ & $5.2e\text{-}5$ & $7.8e\text{-}6$ \\
        \cline{3-6}
          & $10$& $2.7e\text{-}6$ & $1.3e\text{-}13$ & $5.2e\text{-}5$ & $7.8e\text{-}6$ \\
         \hline 
          \hline
          \hline
          \multirow{3}{*}{\footnotesize{(2,2,2)}} & $1$& $2.0e\text{-}5$ & $9.8e\text{-}12$ & $3.8e\text{-}4$ & $6.4e\text{-}5$ \\
        \cline{3-6}
          & $5$& $2.6e\text{-}6$ & $1.6e\text{-}13$ & $5.2e\text{-}5$ & $7.8e\text{-}6$ \\
        \cline{3-6}
          & $10$& $2.8e\text{-}6$ & $6.6e\text{-}14$ & $5.3e\text{-}5$ & $8.0e\text{-}6$ \\
         \hline 
          \hline
          \hline
          \multirow{3}{*}{\footnotesize{(3,2,1)}} & $1$& $5.6e\text{-}3$ & $1.2e\text{-}8$ & $6.0e\text{-}2$ & $1.4e\text{-}2$ \\
        \cline{3-6}
          & $5$& $4.6e\text{-}4$ & $1.6e\text{-}10$ & $2.1e\text{-}2$ & $2.6e\text{-}3$ \\
        \cline{3-6}
          & $10$& $4.6e\text{-}4$ & $1.6e\text{-}10$ & $2.1e\text{-}2$ & $2.6e\text{-}3$ \\
         \hline 
          \hline
          \hline
          \multirow{3}{*}{\footnotesize{(3,2,2)}} & $1$& $5.6e\text{-}3$ & $1.2e\text{-}8$ & $6.0e\text{-}2$ & $1.4e\text{-}2$ \\
        \cline{3-6}
          & $5$& $4.6e\text{-}4$ & $1.6e\text{-}10$ & $2.1e\text{-}2$ & $2.6e\text{-}3$ \\
        \cline{3-6}
          & $10$& $4.6e\text{-}4$ & $1.6e\text{-}10$ & $2.1e\text{-}2$ & $2.6e\text{-}3$ \\
         \hline 
          \hline
          \hline
          \multirow{3}{*}{\footnotesize{(3,3,2)}} & $1$& $5.1e\text{-}3$ & $1.2e\text{-}8$ & $6.0e\text{-}2$ & $1.3e\text{-}2$ \\
        \cline{3-6}
          & $5$& $6.0e\text{-}4$ & $1.3e\text{-}11$ & $2.3e\text{-}2$ & $2.9e\text{-}3$ \\
        \cline{3-6}
          & $10$& $6.0e\text{-}4$ & $1.4e\text{-}11$ & $2.3e\text{-}2$ & $2.9e\text{-}3$ \\
         \hline 
          \hline
          \hline
          \multirow{3}{*}{\footnotesize{(3,3,1)}} & $1$& $5.1e\text{-}3$ & $1.2e\text{-}8$ & $6.0e\text{-}2$ & $1.3e\text{-}2$ \\
        \cline{3-6}
          & $5$& $6.0e\text{-}4$ & $1.4e\text{-}11$ & $2.3e\text{-}2$ & $2.9e\text{-}3$ \\
        \cline{3-6}
          & $10$& $6.0e\text{-}4$ & $1.4e\text{-}11$ & $2.3e\text{-}2$ & $2.9e\text{-}3$ \\
         \hline 
          \hline
          \hline
          \multirow{3}{*}{\footnotesize{(3,3,3)}} & $1$& $1.9e\text{-}2$ & $1.0e\text{-}8$ & $4.3e\text{-}1$ & $6.7e\text{-}2$ \\
        \cline{3-6}
          & $5$& $1.6e\text{-}3$ & $2.1e\text{-}10$ & $2.4e\text{-}2$ & $4.5e\text{-}3$ \\
        \cline{3-6}
          & $10$& $9.5e\text{-}4$ & $7.5e\text{-}10$ & $1.6e\text{-}2$ & $2.5e\text{-}3$ \\
         \hline 
		\end{tabular}
\label{table_FFs}
\end{table}

(\textbf{\emph{Spectra - reconstruction}}) - We now compare MP-IR versus AIR on the \emph{Spectra} problem. As our interests have focused on the filtering properties of MP-IR on the Tikhonov problem, we did not investigate methodologies for determining regularization parameters, nor did we consider any stopping criterion. Both topics deserve their own dedicated studies in the context of mixed precision computing for inverse problems. In our numerical investigation we chose 3 magnitudes of regularization parameters: 1e-2, 1e-3, and 1e-4 as well as two noise levels: $0.5\%$ and $3\%$ to consider the broad behavior of both methods. For method comparison, we \rev{utilize the} relative reconstructive error (RRE) defined by
\begin{equation*} \label{RRE}
    \text{RRE}\left(x^{(k)}\right) = \frac{\left\|x^{(k)}-x\right\|}{\|x\|}
\end{equation*}
where $x$ and $x^{(k)}$ denote the true solution and $k^{th}$ approximate solution determined by MP-IR or AIR, respectively. Using the RRE, we focus on the stability of the solutions by considering what we will refer to as the \emph{stable RRE} (sRRE) which is computed as the mean of the 3rd through the 10th iterates, whose iterate numbers were chosen to reflect relatively stable solution behavior. We also consider the standard deviation values for the same choice of iterates. In our numerical studies, the variants of MP-IR did not require more than 10 iterations to converge to an RRE value that would deviate more than a standard deviation from the sRRE.

\rev{Table \ref{spectraRRE_table} provides the sRRE and standard deviation values at various precisions for MP-IR as well as AIR as a function of the regularization parameter and the noise level of the problem. For succinctness in the table, we did not provide the results for $\alpha^2=1e\text{-}2$ as the presented choices resulted in smaller sRRE values for MP-IR when considered for both noise levels. When AIR was investigated closely, we found that it was stable over the considered 10 iterations with a sRRE of $6.26e\text{-}1$ and standard deviation of $4.0e\text{-}5$ for a choice of $\alpha^2=1e\text{-}1$ and $3\%$ noise, however, this is a significantly worse error than was attainable by the MP-IR method shown in the table indicating a superior MP-IR performance for these choices of $\alpha^2$.}

\rev{We observed that the sRRE values for precision combinations involving fp16 for the preconditioner (i.e., $\text{Pr}_1=3$) could experience larger sRRE variation - this is shown graphically in Figure \ref{SpectraGraphs} as well as in the standard deviation columns of Table \ref{spectraRRE_table}. Specifically, we note that while the sRRE values for all precision combinations were fairly consistent across the same level of noise, regularization parameter, and precision combination, the standard deviation of the sRRE was found to be closer to the unit round-off of Pr$_1$. This aligns with theoretical expectations given that this is the same precision that preconditioner is computed in. This variation can be observed directly in Figure \ref{SpectraGraphs}a for combination $(3,\,3,\,3)$, and additionally for combinations $(3,\,3,\,1)$ and $(3,\,3,\,2)$ in Figure \ref{SpectraGraphs}b. From this experiment it appears that variability in RRE can be reduced by balancing the choice of the preconditioner in lower precision with higher working and residual precisions.}

\begin{table}[ht]
        \footnotesize
        \caption{\rev{\emph{Spectra} example: stable relative reconstructive error (i.e., the average RRE for iterations $3$ - $10$) and the standard deviation for MP-IR and AIR methods as a function of the regularization parameter for $0.5\%$ and $3\%$ noise. MP-IR errors are provided for various mixed precision combinations.}}
	\centering
		\begin{tabular}{c | c | c | c | c | c | c }
                \multirow{3}{*}{$\alpha^2$} & \multirow{3}{*}{\footnotesize{Method}} & \multirow{3}{*}{\scriptsize{(Pr$_1$, Pr$_2$, Pr$_3$)}} & \multicolumn{4}{c}{\footnotesize{Noise Level}} \\
                 \cline{4-7}
 			& & & \multicolumn{2}{c}{$0.5\%$} & \multicolumn{2}{c}{$3\%$} \\
                \cline{4-7}
                & & & \footnotesize{sRRE} & \footnotesize{Std. sRRE} & \footnotesize{sRRE} & \footnotesize{Std. sRRE} \\
			\ChangeRT{1.5pt}
                \multirow{11}{*}{\footnotesize{1e-3}} & \footnotesize{AIR} & \footnotesize{--} & $2.0e11$ & $5.8e11$ & $1.1e12$ & $3.4e12$ \\ 
                \cline{2-7}
			& \multirow{10}{*}{\footnotesize{MP-IR}} & \footnotesize{(1,1,1)}& $3.446e\text{-}1$ & $9.7e\text{-}17$ & $3.632e\text{-}1$ & $1.5e\text{-}16$ \\
                \cline{3-7}
			  & &  \footnotesize{(2,1,1)}& $3.446e\text{-}1$ & $3.6e\text{-}12$ & $3.632e\text{-}1$ & $5.2e\text{-}12$ \\
                \cline{3-7}
			  & &  \footnotesize{(2,2,1)}& $3.446e\text{-}1$ & $6.1e\text{-}12$ & $3.632e\text{-}1$ & $1.7e\text{-}12$ \\
                \cline{3-7}
                & &  \footnotesize{(2,2,2)}& $3.446e\text{-}1$ & $6.8e\text{-}8$ & $3.632e\text{-}1$ & $7.4e\text{-}8$ \\
                \cline{3-7}
			  & &  \footnotesize{(3,2,1)}& $3.446e\text{-}1$ & $1.6e\text{-}6$ & $3.632e\text{-}1$ & $9.3e\text{-}7$ \\
                \cline{3-7}
			  & &  \footnotesize{(3,2,2)}& $3.446e\text{-}1$ & $1.5e\text{-}6$ & $3.632e\text{-}1$ & $9.5e\text{-}7$ \\
            \cline{3-7}
                & &  \footnotesize{(3,3,2)}& $3.445e\text{-}1$ & $1.7e\text{-}5$ & $3.633e\text{-}1$ & $7.6e\text{-}6$ \\
            \cline{3-7}
                & &  \footnotesize{(3,3,1)}& $3.445e\text{-}1$ & $1.9e\text{-}5$ & $3.633e\text{-}1$ & $9.7e\text{-}6$ \\
            \cline{3-7}
			  & &  \footnotesize{(3,3,3)}& $3.442e\text{-}1$ & $7.5e\text{-}4$ & $3.634e\text{-}1$ & $6.6e\text{-}4$ \\
			 \hline 
              \hline
              \hline
             \multirow{11}{*}{\footnotesize{1e-4}} & \footnotesize{AIR} & \footnotesize{--} & $6.2e19$ & $1.8e20$ & $3.7e20$ & $1.1e21$ \\ 
                \cline{2-7}
			& \multirow{10}{*}{\footnotesize{MP-IR}} & \footnotesize{(1,1,1)}& $2.129e\text{-}1$ & $3.4e\text{-}16$ & $3.289e\text{-}1$ & $2.7e\text{-}16$ \\
                \cline{3-7}
			  & &  \footnotesize{(2,1,1)}& $2.129e\text{-}1$ & $5.0e\text{-}11$ & $3.289e\text{-}1$ & $3.1e\text{-}11$ \\
                \cline{3-7}
			  & &  \footnotesize{(2,2,1)}& $2.129e\text{-}1$ & $6.8e\text{-}11$ & $3.289e\text{-}1$ & $1.4e\text{-}11$ \\
                \cline{3-7}
                & &  \footnotesize{(2,2,2)}& $2.129e\text{-}1$ & $1.4e\text{-}7$ & $3.289e\text{-}1$ & $1.5e\text{-}7$ \\
                \cline{3-7}
			  & &  \footnotesize{(3,2,1)}& $2.129e\text{-}1$ & $2.2e\text{-}5$ & $3.289e\text{-}1$ & $4.0e\text{-}5$ \\
                \cline{3-7}
			  & &  \footnotesize{(3,2,2)}& $2.129e\text{-}1$ & $2.3e\text{-}5$ & $3.289e\text{-}1$ & $4.0e\text{-}5$ \\
            \cline{3-7}
                & &  \footnotesize{(3,3,2)}& $2.128e\text{-}1$ & $1.3e\text{-}4$ & $3.289e\text{-}1$ & $1.1e\text{-}4$ \\
            \cline{3-7}
                & &  \footnotesize{(3,3,1)}& $2.129e\text{-}1$ & $1.5e\text{-}4$ & $3.289e\text{-}1$ & $1.9e\text{-}4$ \\
            \cline{3-7}
			  & &  \footnotesize{(3,3,3)}& $2.124e\text{-}1$ & $1.5e\text{-}3$ & $3.277e\text{-}1$ & $1.4e\text{-}3$ \\
		\end{tabular}
\label{spectraRRE_table}
\end{table}

\begin{figure}[ht]
\centering
\begin{minipage}{1\linewidth}
	\centering
	\begin{minipage}{0.475\linewidth}
		\centering
		\includegraphics[width=\linewidth]{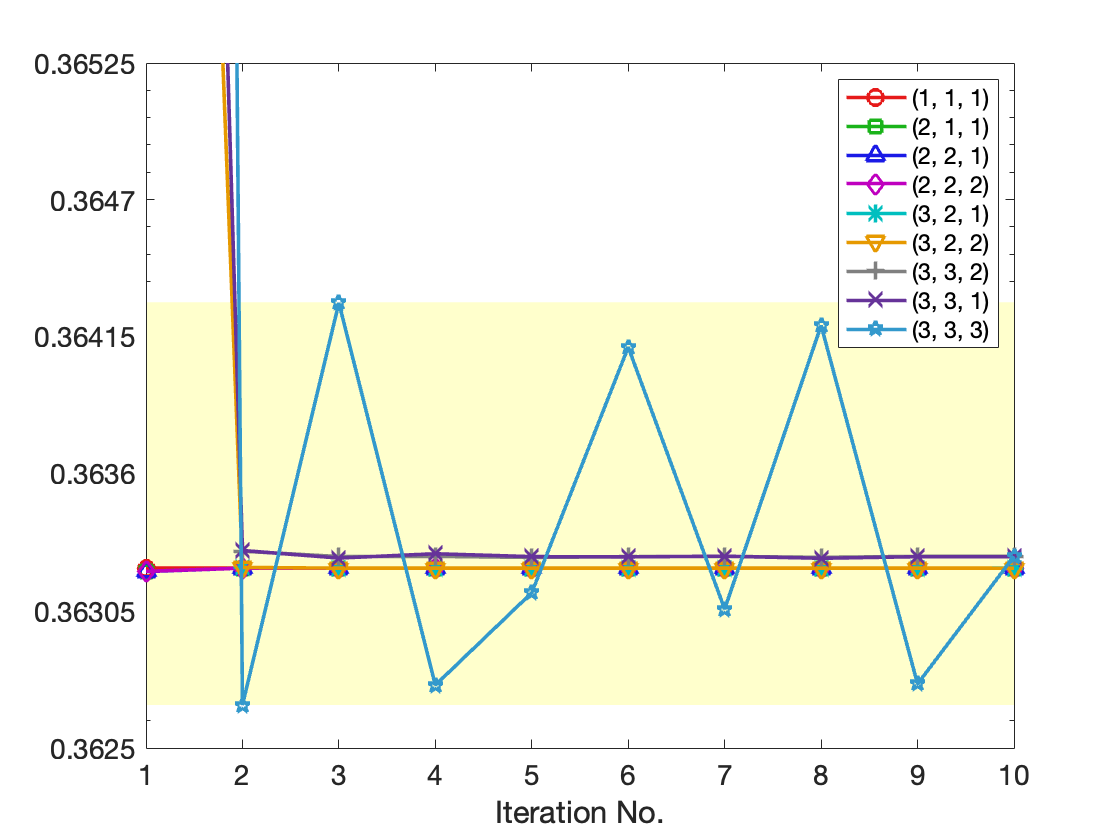}\\(a)
	\end{minipage}
        \begin{minipage}{0.475\linewidth}
		\centering
		\includegraphics[width=\linewidth]{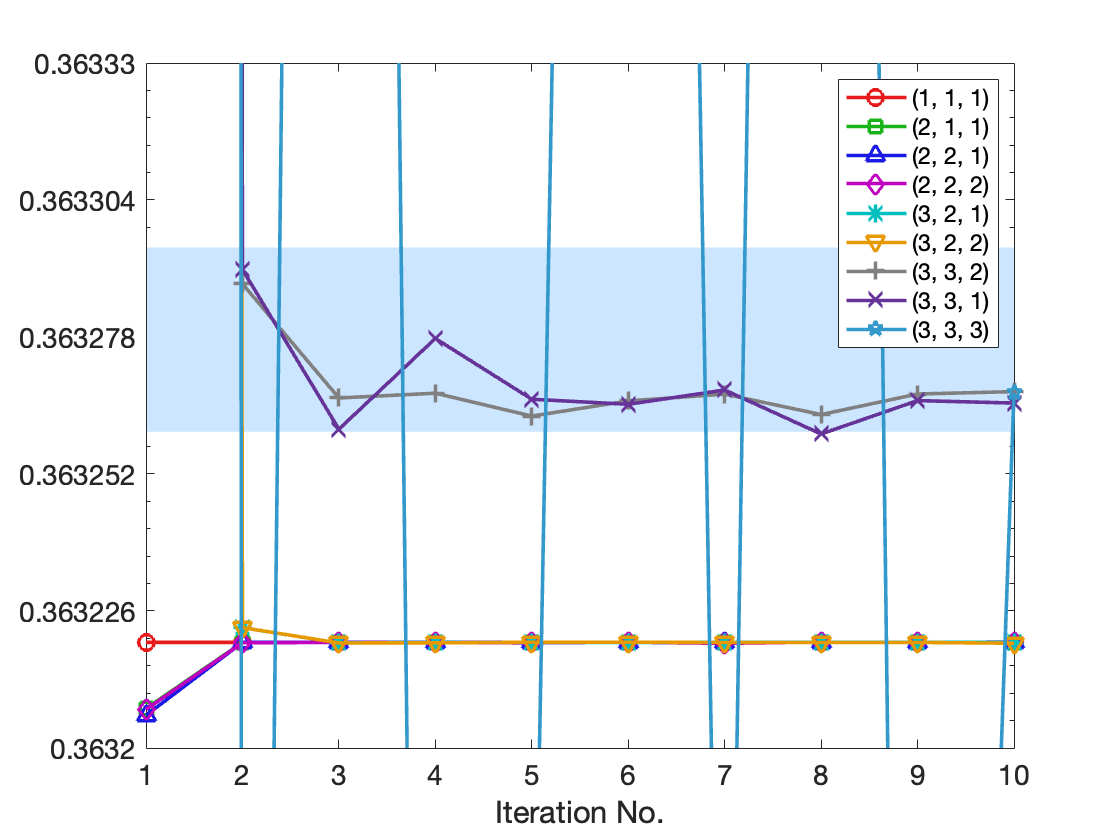}\\(b)
	\end{minipage}
\end{minipage}
\caption{\rev{\emph{Spectra} example: graph of RRE vs. iteration number for the MP-IR results for various precision combinations for $3\%$ noise and $\alpha^2=1e\text{-}3$. Pane (a) focuses on precision combination $(3,\,3,\,3)$ capturing the large variation in the yellow region compared to the other precision combinations. Pane (b) focuses on the variability of precision combinations $(3,\,3,\,1)$ and $(3,\,3,\,2)$, denoted in the blue region which is smaller than the yellow region in pane (a).}}
\label{SpectraGraphs}
\end{figure}

(\emph{\textbf{Image Deblurring - reconstruction}}) - For our final example we consider the 2D image reconstruction problem which comes from the software package IR Tools \cite{GHN18}. Similarly to the reconstruction of \emph{Spectra}, we compare the behavior of AIR against MP-IR for various precision combinations. The Hubble image for this problem in Figure \ref{gaussProbSetup}a contains $256 \times 256$ pixels which corresponds to a blurring operator of \rev{size $256^2 \times 256^2$ - making both the application of \textbf{chop} and the storage of the full problem untenable.} However, because the point spread function (Figure \ref{gaussProbSetup}b) is rank-1, the blurring matrix $A$ can be decomposed into a Kronecker product, each part of which is of reasonable size to compute its SVD \rev{in fp64, fp32, and fp16}:
\begin{equation*}
            A = A_r \otimes A_c = (U_r\Sigma_rV_r^T) \otimes (U_c\Sigma_cV_c^T)= (U_r \otimes U_c)(\Sigma_r \otimes \Sigma_c)(V_r \otimes V_c)^T.
\end{equation*}

With this Kronecker structure, it is possible to implement MP-IR efficiently; see \cite{Nagy23} for more implementation details regarding \textbf{chop} and the Kronecker product. To efficiently implement AIR one may form a normal matrix $C$ that admits a spectral factorization by imposing periodic boundary conditions when convolving with the point spread function; see \cite{HNO} for an overview of 2D image convolution. Because of the astronomical nature of the Hubble image, the choice of periodic or zero boundary conditions is typically experimentally equivalent. Zero boundary conditions were used in the formation of $A$. Figure \ref{gaussProbSetup}c shows the blurred image with $1\%$ noise.

\begin{figure}[ht]
\centering
\begin{minipage}{1\linewidth}
	\centering
	\begin{minipage}{0.32\linewidth}
		\centering
		\includegraphics[width=\linewidth]{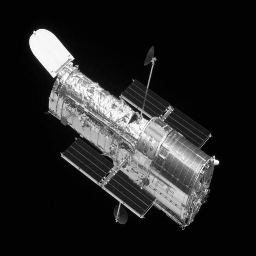}\\(a)
	\end{minipage}
        \begin{minipage}{0.32\linewidth}
		\centering
		\includegraphics[width=\linewidth]{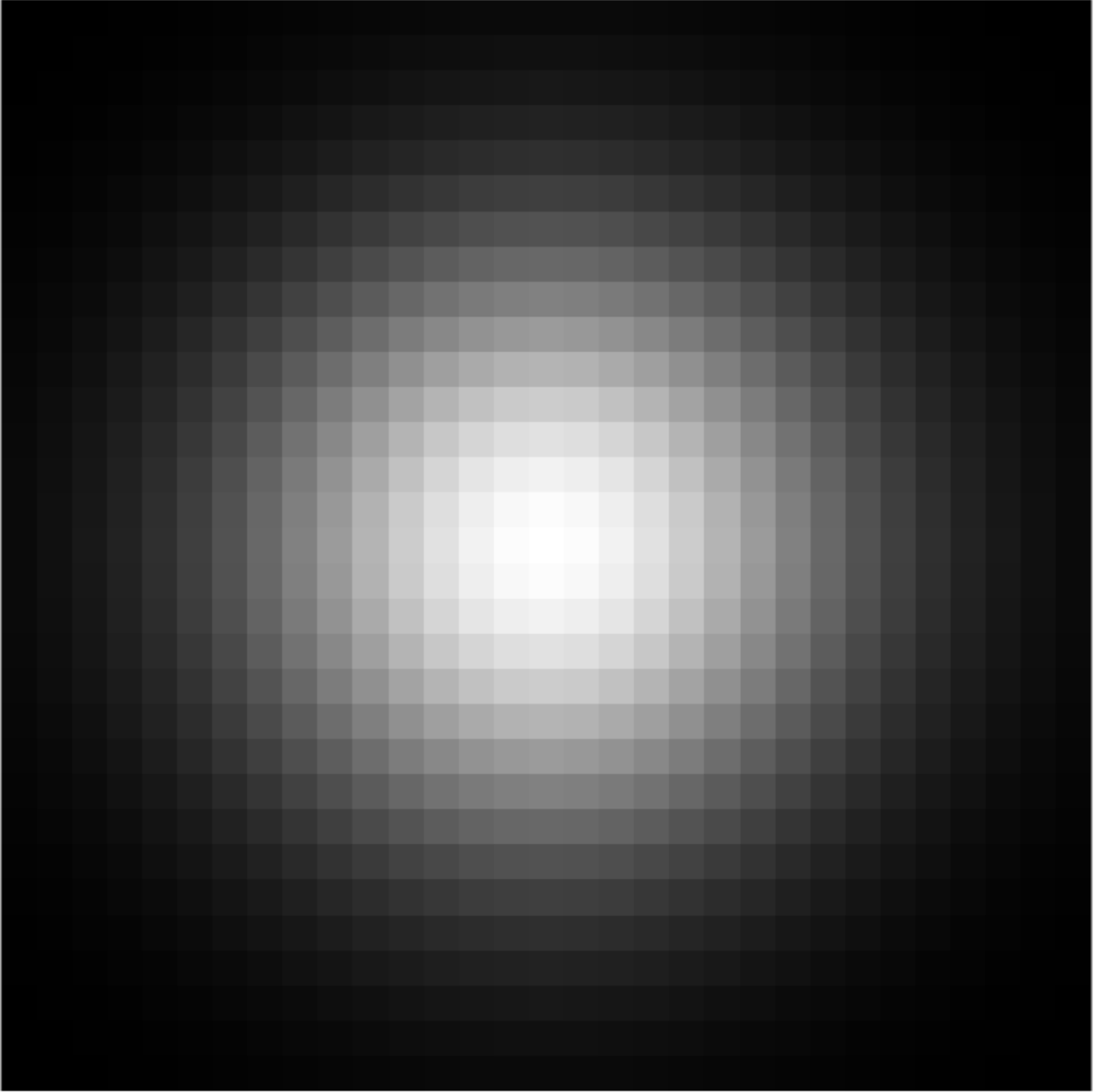}\\(b)
	\end{minipage}
        \begin{minipage}{0.32\linewidth}
		\centering
		\includegraphics[width=\linewidth]{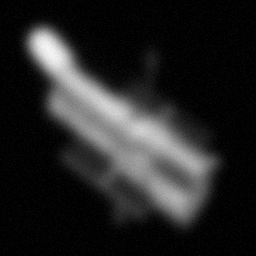}\\(c)
	\end{minipage}
\end{minipage}
\caption{\rev{Image deblurring example: (a) true Hubble image ($256 \times 256$ pixels), (b) PSF ($31 \times 31$ pixels), (c) blurred and $1\%$ noised image ($256 \times 256$ pixels).}}
\label{gaussProbSetup}
\end{figure}

Similarly to the numerical results shown for the \emph{Spectra} reconstruction example, Table \ref{table_ex2} provides \rev{the sRRE and standard deviation values at various precision combinations for MP-IR as well as AIR as a function of the regularization parameter and the noise level of the problem. Here, we note that we considered different noise levels: $1\%$ and $5\%$ and regularization parameters: $1e\text{-}1$, $1e\text{-}2$, and $1e\text{-}3$ compared to the \emph{Spectra} example, though again for the same reasons as before, we only show results for the two smaller regularization parameters.} The same general behaviors observed for the \emph{Spectra} example were also observed here \rev{with respect to observing larger sRRE standard deviations when utilizing a lower precision preconditioner.} 

We do note the observed rise in the sRRE value for the precision combination $(3,\,3,\,3)$ involving Pr$_3=3$ when the smaller $\alpha^2 = 1e\text{-}3$ was used. We attribute this behavior, in part, to the large variability experienced when using fp16 in the preconditioner (or equivalently, the solution basis) which can be seen through the standard deviation of the sRRE values in Table \ref{table_ex2}. From observing the effect that the choice of regularization parameter $1e\text{-}2$ versus $1e\text{-}3$ can make, it appears that this effect can be reduced with more regularization, but with an increase in the sRRE. For conciseness, we do not show reconstructed images using AIR and MP-IR since the reconstructions amongst the latter for the various precision combinations are indiscernible and the former is meaningless given its RRE behavior.

\begin{table}[ht]
        \footnotesize
        \caption{\rev{\emph{Image deblurring} example: stable relative reconstructive error (i.e., the average RRE for iterations $3$ - $10$) and the standard deviation for MP-IR and AIR methods as a function of the regularization parameter for $1\%$ and $5\%$ noise. MP-IR errors are provided for various mixed precision combinations.}}
	\centering
		\begin{tabular}{c | c | c | c | c | c | c }
                 \multirow{3}{*}{$\alpha^2$} & \multirow{3}{*}{\footnotesize{Method}} & \multirow{3}{*}{\scriptsize{(Pr$_1$, Pr$_2$, Pr$_3$)}} & \multicolumn{4}{c}{\footnotesize{Noise Level}} \\
                 \cline{4-7}
 			& & & \multicolumn{2}{c}{$1\%$} & \multicolumn{2}{c}{$5\%$} \\
                \cline{4-7}
                & & & \footnotesize{sRRE} & \footnotesize{Std. sRRE} & \footnotesize{sRRE} & \footnotesize{Std. sRRE} \\
			\ChangeRT{1.5pt}
                \multirow{11}{*}{\footnotesize{1e-2}} & \footnotesize{AIR} & \footnotesize{--} & $1.9e3$ & $4.6e3$ & $9.2e3$ & $2.3e4$ \\ 
                \cline{2-7}
			& \multirow{10}{*}{\footnotesize{MP-IR}} & \footnotesize{(1,1,1)}& $2.620e\text{-}1$ & $4.5e\text{-}17$ & $2.628e\text{-}1$ & $4.5e\text{-}17$ \\
                \cline{3-7}
			  & &  \footnotesize{(2,1,1)}& $2.620e\text{-}1$ & $7.2e\text{-}8$ & $2.628e\text{-}1$ & $5.7e\text{-}8$ \\
                \cline{3-7}
			  & &  \footnotesize{(2,2,1)}& $2.620e\text{-}1$ & $6.7e\text{-}8$ & $2.628e\text{-}1$ & $5.2e\text{-}8$ \\
                \cline{3-7}
                & &  \footnotesize{(2,2,2)}& $2.620e\text{-}1$ & $6.0e\text{-}8$ & $2.628e\text{-}1$ & $5.5e\text{-}8$ \\
                \cline{3-7}
			  & &  \footnotesize{(3,2,1)}& $2.621e\text{-}1$ & $2.7e\text{-}4$ & $2.629e\text{-}1$ & $2.7e\text{-}4$ \\
                \cline{3-7}
			  & &  \footnotesize{(3,2,2)}& $2.621e\text{-}1$ & $2.7e\text{-}4$ & $2.629e\text{-}1$ & $2.7e\text{-}4$ \\
            \cline{3-7}
                & &  \footnotesize{(3,3,2)}& $2.629e\text{-}1$ & $2.7e\text{-}3$ & $2.636e\text{-}1$ & $2.6e\text{-}3$ \\
            \cline{3-7}
                & &  \footnotesize{(3,3,1)}& $2.629e\text{-}1$ & $2.7e\text{-}3$ & $2.636e\text{-}1$ & $2.6e\text{-}3$ \\
            \cline{3-7}
			  & &  \footnotesize{(3,3,3)}& $2.630e\text{-}1$ & $3.2e\text{-}3$ & $2.639e\text{-}1$ & $3.3e\text{-}3$ \\
                \hline 
              \hline
              \hline
                \multirow{11}{*}{\footnotesize{1e-3}} & \footnotesize{AIR} & \footnotesize{--} & $3.9e11$ & $1.4e12$ & $1.8e12$ & $5.2e12$ \\ 
                \cline{2-7}
			& \multirow{10}{*}{\footnotesize{MP-IR}} & \footnotesize{(1,1,1)}& $2.468e\text{-}1$ & $3.3e\text{-}17$ & $2.561e\text{-}1$ & $2.0e\text{-}16$ \\
                \cline{3-7}
			  & &  \footnotesize{(2,1,1)}& $2.468e\text{-}1$ & $1.2e\text{-}7$ & $2.561e\text{-}1$ & $8.6e\text{-}8$ \\
                \cline{3-7}
			  & &  \footnotesize{(2,2,1)}& $2.468e\text{-}1$ & $2.9e\text{-}8$ & $2.561e\text{-}1$ & $6.1e\text{-}8$ \\
                \cline{3-7}
                & &  \footnotesize{(2,2,2)}& $2.468e\text{-}1$ & $1.0e\text{-}7$ & $2.561e\text{-}1$ & $7.7e\text{-}8$ \\
                \cline{3-7}
			  & &  \footnotesize{(3,2,1)}& $2.547e\text{-}1$ & $2.5e\text{-}2$ & $2.638e\text{-}1$ & $2.4e\text{-}2$ \\
                \cline{3-7}
			  & &  \footnotesize{(3,2,2)}& $2.547e\text{-}1$ & $2.5e\text{-}2$ & $2.638e\text{-}1$ & $2.4e\text{-}2$ \\
            \cline{3-7}
                & &  \footnotesize{(3,3,2)}& $2.925e\text{-}1$ & $1.4e\text{-}1$ & $3.031e\text{-}1$ & $1.5e\text{-}1$ \\
            \cline{3-7}
                & &  \footnotesize{(3,3,1)}& $2.925e\text{-}1$ & $1.4e\text{-}1$ & $3.031e\text{-}1$ & $1.5e\text{-}1$ \\
            \cline{3-7}
			  & &  \footnotesize{(3,3,3)}& $3.001e\text{-}1$ & $1.7e\text{-}1$ & $3.098e\text{-}1$ & $1.7e\text{-}1$ \\
              \hline
		\end{tabular}
\label{table_ex2}
\end{table}


\section{Conclusion} \label{sec6}

In this work, we have investigated IR on the Tikhonov problem in mixed precision. We have shown that the iterates of IR on the Tikhonov problem computed in mixed precision behave in a filtering manner by deriving methodology to formulate the iterates as a recursive relationship involving the iterates of preconditioned Landweber with a Tikhonov-type preconditioner and previous terms. Our numerical results suggest that simulated mixed precision IR applied to inverse problems \Rev{can, on average, give comparable accuracy to within a few decimal places compared to results computed only in double precision. Additionally, our results suggest that the variability of the computed solution when utilizing a low precision preconditioner can be influenced by the level of regularization as well as by the choice of the working and residual precisions.} We also found that the MP-IR method to be superior when compared to the described AIR method \rev{which may be appropriate} and efficient to compute with in image deblurring applications.

\section*{\rev{Acknowledgments}} \label{sec7}

\rev{The authors wish to thank the anonymous referees and the handling editor for carefully reading the manuscript and providing many useful suggestions that improved the presentation of the work. Additionally, they would like to thank Alan Edelman and Evelyne Ringoot for providing their invaluable computing expertise. L. O. would like to thank Marco Donatelli for hosting a research visit to Como in 2025 which included fruitful discussions that also helped improve the manuscript.
This work was partially supported by the National Science Foundation (NSF) under grants DMS-2038118 and DMS-2208294.}

\bibliographystyle{siam}
\bibliography{references}

\end{document}